\documentclass[11pt]{article}

\usepackage[T1]{fontenc}
\usepackage{lmodern}
\usepackage[margin=1.10in]{geometry}
\usepackage{amsmath,amssymb,amsthm,mathtools}
\usepackage{booktabs}
\usepackage{microtype}
\usepackage{xurl}
\usepackage[hidelinks]{hyperref}
\hypersetup{
  pdftitle={An Explicit Counterexample to the Rank-Two Poisson Conjecture},
  pdfauthor={Christopher D. Long},
  pdfsubject={An explicit noninvertible endomorphism of the canonical Poisson algebra on two canonical pairs},
  pdfkeywords={Poisson Conjecture, canonical Poisson algebra, symplectic-form-preserving polynomial map, Jacobian Conjecture, Dixmier Conjecture, Weyl algebra}
}
\allowdisplaybreaks

\newcommand{\C}{\mathbb C}
\newcommand{\Q}{\mathbb Q}
\newcommand{\N}{\mathbb N}
\newcommand{\Aff}{\mathbb A}
\newcommand{\Pcal}{\mathcal P}
\newcommand{\Bcal}{\mathcal B}
\newcommand{\Weyl}{\mathcal A}
\newcommand{\PC}{\mathrm{PC}}
\newcommand{\JC}{\mathrm{JC}}
\newcommand{\DC}{\mathrm{DC}}
\newcommand{\gr}{\operatorname{gr}}
\newcommand{\id}{\operatorname{id}}

\newtheorem{theorem}{Theorem}[section]
\newtheorem{proposition}[theorem]{Proposition}
\newtheorem{lemma}[theorem]{Lemma}
\newtheorem{corollary}[theorem]{Corollary}
\theoremstyle{remark}
\newtheorem{remark}[theorem]{Remark}

\title{An Explicit Counterexample to the Rank-Two Poisson Conjecture}
\author{Christopher D. Long\\\texttt{galizur@gmail.com}}
\date{July 21, 2026}

\begin{document}
\maketitle

\begin{abstract}
Let
\[
  \Pcal_2=\C[x,q,p,z]
\]
carry the canonical Poisson bracket determined by
\(
  \{p,x\}=\{z,q\}=1
\)
and by the vanishing of the other brackets between distinct generators.
Here and throughout, ``rank two'' means two canonical pairs in the standard
indexing of the canonical Poisson algebras; thus there are four polynomial
generators and the Poisson tensor has geometric rank four.  We give explicit
polynomials
\[
  R,T,D,S\in\Q[x,q,p,z]
\]
satisfying
\[
  \{D,R\}=1,\qquad \{S,T\}=1,
  \qquad
  \{R,S\}=\{R,T\}=\{D,S\}=\{D,T\}=0,
\]
while
\[
  R=x(2-3xq).
\]
Consequently, the assignment
\(
  (x,q,p,z)\mapsto(R,T,D,S)
\)
defines a Poisson endomorphism of \(\Pcal_2\) that is not an automorphism.
This disproves the Poisson Conjecture for two canonical pairs, and hence for
every number of canonical pairs at least two.  The associated polynomial map
of \(\Aff^4\) preserves the canonical symplectic form, has Jacobian
determinant one, and has an explicit fiber consisting of exactly three points.
The proof uses a polynomial source coordinate system in which the symplectic
identity reduces to three displayed coefficient identities.  A separate
appendix uses the same four output polynomials and their
Hamiltonian duals to construct an explicit nonautomorphic endomorphism of the
fourth Weyl algebra.
\end{abstract}

\medskip
\noindent\textbf{2020 Mathematics Subject Classification.}
Primary 17B63; Secondary 14R15, 16S32, 53D05.

\noindent\textbf{Keywords.}
Poisson Conjecture, canonical Poisson algebra, symplectic-form-preserving
polynomial map, Jacobian Conjecture, Dixmier Conjecture, Weyl algebra.

\section{Introduction and rank convention}\label{sec:introduction}

For a positive integer \(n\), write
\begin{equation}\label{eq:canonical-algebra-n}
  \Pcal_n
  =\C[x_1,\dots,x_n,p_1,\dots,p_n]
\end{equation}
with canonical Poisson bracket
\begin{equation}\label{eq:canonical-bracket-n}
  \{f,g\}
  =\sum_{i=1}^n
   \left(\frac{\partial f}{\partial p_i}
     \frac{\partial g}{\partial x_i}
     -
     \frac{\partial f}{\partial x_i}
     \frac{\partial g}{\partial p_i}
   \right).
\end{equation}
Thus \(\{p_i,x_j\}=\delta_{ij}\).  In this paper, we use \emph{rank \(n\)}
to mean the index \(n\) in \eqref{eq:canonical-algebra-n}, equivalently the
number of canonical pairs.  This differs from the geometric rank of the Poisson
tensor, which is \(2n\).  In particular, the title's ``rank two'' means two
canonical pairs, four polynomial variables, and a Poisson tensor of rank four.
This is also the indexing convention of Adjamagbo and van den Essen, whose
canonical Poisson algebra \(\Pcal_n\) has \(2n\) polynomial generators
\cite[Section~1]{AdjamagboVanDenEssen}.

The assertion \(\PC(n)\) is that every Poisson endomorphism of \(\Pcal_n\) is
an automorphism.  Recall that \(\JC(3)\), the three-variable Jacobian
Conjecture, asserts that a polynomial map \(\Aff^3_\C\to\Aff^3_\C\) with
nonzero constant Jacobian determinant is a polynomial automorphism
\cite{Keller}.  The present construction begins with an explicit
counterexample to this three-variable assertion.  The rank-two Poisson
endomorphism is obtained by adjoining a polynomial source coordinate and a
Hamiltonian correction, and every Poisson identity is then verified directly.

The construction below grew from the three-variable polynomial map
\begin{align}
  \Pi(X,Y,W)
    &=(1+XY)^3W+Y^2(1+XY)(4+3XY),\notag\\
  \Sigma(X,Y,W)
    &=Y+3X(1+XY)^2W+3XY^2(4+3XY),\notag\\
  \mathcal R(X,Y,W)
    &=2X-3X^2Y-X^3W.\label{eq:announced-map}
\end{align}
This map was announced by Alp\"oge, whose post thanks Akhil Mathew for asking
about the problem and Fable for working on it \cite{Alpoge}.  Buzzard's
contemporaneous account reports that Mathew suggested the problem to Alp\"oge
and that Fable found the counterexample \cite{Buzzard}; the Ulam research note
similarly credits Mathew for asking the question and Fable for the work leading
to the example \cite[Introduction]{UlamJacobian}.  The Ulam note independently
verifies the determinant and the three-point collision and studies the fibers
and global geometry \cite[Theorem~3.1 and Section~4]{UlamJacobian}.  Its
Jacobian determinant is \(-2\), and the three points
\begin{equation}\label{eq:announced-core-points}
  \left(0,0,-\frac14\right),\qquad
  \left(1,-\frac32,\frac{13}{2}\right),\qquad
  \left(-1,\frac32,\frac{13}{2}\right)
\end{equation}
have common image \((-1/4,0,0)\).  The standard cotangent lift
\[
  (X,\Xi)\longmapsto
  \bigl(F(X),J_F(X)^{-\mathsf T}\Xi\bigr),
  \qquad F=(\Pi,\Sigma,\mathcal R),
\]
is polynomial because \(\det J_F=-2\), preserves the canonical one-form
\(\Xi^{\mathsf T}dX\), and is noninjective on the zero section.  Thus the
announced map already gives a Poisson counterexample on three canonical pairs.
The purpose of the present construction is to reach two canonical pairs.

The three-dimensional core used here is the determinant-one normalization
\begin{equation}\label{eq:normalized-core-intro}
  G=(\mathcal R,-\Pi/2,\Sigma).
\end{equation}
The four-dimensional Poisson endomorphism is obtained from this core by a
polynomial change of source coordinates and a Hamiltonian correction.  Every
identity needed below is proved directly, so the proof does not depend on an
external verification of \eqref{eq:announced-map}.

The main result is the following.

\begin{theorem}[Main theorem]\label{thm:main}
There are explicit polynomials \(R,T,D,S\in\Q[x,q,p,z]\) such that
\begin{equation}\label{eq:Phi-intro}
  \Phi:\Pcal_2\longrightarrow\Pcal_2,
  \qquad
  \Phi(x)=R,\quad \Phi(q)=T,\quad \Phi(p)=D,\quad \Phi(z)=S,
\end{equation}
is a Poisson endomorphism but not an automorphism.  Its associated point map
\[
  \Phi_{\mathrm{pt}}=(R,T,D,S):\Aff^4_\C\longrightarrow\Aff^4_\C
\]
satisfies
\[
  \det J\Phi_{\mathrm{pt}}=1,
\]
and the fiber over \((0,1/8,0,0)\) consists exactly of
\begin{equation}\label{eq:three-points-intro}
  \left(0,0,\frac1{24},-\frac18\right),\qquad
  \left(1,\frac23,\frac{247}{96},-\frac{89}{64}\right),\qquad
  \left(-1,-\frac23,\frac{247}{96},-\frac{89}{64}\right).
\end{equation}
Consequently, \(\PC(2)\) is false.
\end{theorem}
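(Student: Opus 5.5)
The plan is to construct \(R,T,D,S\) explicitly from the determinant-one core \(G=(\mathcal R,-\Pi/2,\Sigma)\) of \eqref{eq:normalized-core-intro}, and then to verify the six bracket identities, the determinant, and the fiber directly.

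\textbf{Step 1: the core determinant.} First I will check that \(\det J_G=1\). By \eqref{eq:announced-map} we have \(\det J_{(\Pi,\Sigma,\mathcal R)}=-2\); reordering the components to \((\mathcal R,\Pi,\Sigma)\) is a cyclic permutation, and scaling \(\Pi\) by \(-1/2\) then gives determinant \(1\). I will verify this determinant directly by expanding the \(3\times3\) minors, rather than quoting \cite{UlamJacobian}.

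\textbf{Step 2: the source coordinates and the lift.} Next I will introduce polynomial source coordinates \(X=x\), \(Y=Y(x,q,p,z)\), \(W=W(x,q,p,z)\). They are chosen so that \(R=\mathcal R(X,Y,W)\) collapses to \(x(2-3xq)\), which forces \(X^2Y+X^3W/3=x^2q\), for instance \(Y=q-xW/3\) with \(W\) still free. Set \(D=p+(\text{correction})\) and let \(T,S\) be built from \(-\Pi/2\) and \(\Sigma\) plus a Hamiltonian correction. The guiding principle is that the map \((x,q,p,z)\mapsto(R,T,D,S)\) should be a symplectomorphism in which \((R,D)\) and \((T,S)\) are conjugate pairs. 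Concretely, I will choose a generating function, or equivalently require that \(\{D,R\}=1\) and that \(T,S\) Poisson-commute with \(R\) and \(D\). The free parameter \(W\) is then fixed so that \(\{S,T\}=1\) holds identically.

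\textbf{Step 3: the bracket identities.} Verifying the brackets reduces to polynomial identities in the new coordinates. The paper says these reduce to three coefficient identities, and I would organize the verification the same way: expand each bracket by the chain rule in \((X,Y,W)\) and collect coefficients in powers of the auxiliary coordinate. This step is the main obstacle, because finding the correct correction terms so that all six brackets come out exactly right is where the real content lies. I would first try the ansatz that corrections are linear in \(p\) and \(z\), with coefficients read off from the adjugate of \(J_G\), mimicking the cotangent lift.

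\textbf{Step 4: determinant, fiber, and conclusion.} The determinant follows formally from the brackets. The Poisson bivector is preserved, so \(J\Phi_{\mathrm{pt}}\) is symplectic, hence \(\det J\Phi_{\mathrm{pt}}=\pm1\); evaluating at a single point, or using connectedness of \(\mathrm{Sp}\), gives \(+1\).

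For the fiber, I will first substitute the three points of \eqref{eq:three-points-intro} and check that each maps to \((0,1/8,0,0)\). To show there are no other points, I will solve the system directly. The equation \(R=0\) gives \(x=0\) or \(xq=2/3\). In each case, the remaining equations reduce to low-degree equations, essentially the fiber computation for the core, which matches \eqref{eq:announced-core-points}. This yields exactly the three listed points.

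Noninjectivity of \(\Phi_{\mathrm{pt}}\) then shows that \(\Phi_{\mathrm{pt}}\) is not an automorphism of \(\Aff^4\). Hence \(\Phi\) is a non-surjective Poisson endomorphism, and \(\PC(2)\) is false.
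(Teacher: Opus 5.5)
Your proposal has a genuine gap, and it sits where you say the real content lies. Steps~2--3 never produce \(R,T,D,S\), yet the theorem is precisely the assertion that such explicit polynomials exist. Saying that \(W\) will be ``fixed so that \(\{S,T\}=1\) holds identically'' restates a nonlinear PDE for \(W\); it does not solve it.

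The first missing idea is the paper's choice \(W=\beta=3x^2p+2(1-3xq)z-9q^2\), with \(y=q-x\beta/3\). This \(\beta\) satisfies \(\{R,\beta\}=0\) for \(R=x(2-3xq)\), and the brackets among \(x,y,\beta\) close up. So \(\Q[x,y,\beta]\) is a Poisson subalgebra carrying the Jacobian bracket \(\{f,g\}=-\det\partial(R,f,g)/\partial(x,y,\beta)\) (Proposition~\ref{prop:induced-bracket}). Then \(R\) is a Casimir, and \(\det J_G=1\) alone gives \(\{S,T\}=1\) and \(\{R,S\}=\{R,T\}=0\), with \(T,S\) just the core composed with \((x,y,\beta)\). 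No correction to \(T\) or \(S\) is needed; the correction you plan for them is aimed at the wrong coordinates.

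The second missing idea is the fourth coordinate, and this is where your fallback ansatz fails. The paper takes \(D_0=\tfrac{1+3xq}{2}p-3q^2z\). It satisfies \(\{D_0,R\}=1\) and makes \((x,y,\beta,D_0)\) a polynomial coordinate system. Writing \(\omega=dR\wedge dD_0+\Theta\), one must then find \(H\in\Q[x,y,\beta]\) with \(dR\wedge dH+dT\wedge dS=\Theta\); only this gives \(\{D,S\}=\{D,T\}=0\).

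Your ansatz of a correction linear in \(p,z\) read off from the adjugate of \(J_G\) cannot do this, for two reasons:
\begin{itemize}
\item A cotangent lift of a three-variable map needs three conjugate pairs, which is exactly why the announced map only refutes \(\PC(3)\).
\item For these \(R,T,S\), any admissible \(D\) differs from \(D_0+H\) by an element Poisson-commuting with \(R,T,S\). Such an element is a polynomial in \(R\) alone, since \(R\) is non-composite. But \(H\) in \eqref{eq:H-def}, rewritten over \(\Q[x,q]\), has \(\beta^5\)-coefficient \(x^5R/810\). So \(D\) is forced to have degree five in \((p,z)\).
\end{itemize}

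Your fiber step depends on the same missing pieces. Reducing \(R=0\), \(T=1/8\), \(D=0\), \(S=0\) to the core equations, and knowing that each core solution lifts to exactly one point, both require \((x,y,\beta,D)\) to be a polynomial automorphism (Proposition~\ref{prop:source-automorphism}). The listed values \(1/24,-1/8,247/96,-89/64\) come from the values of \(H\) at the core points. Without \(H\), neither the exactness of the fiber nor the three points can be verified.

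Your Step~1, the determinant argument in Step~4, and the deduction of non-automorphy from noninjectivity are fine once the polynomials exist.
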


The formulas have rational coefficients and therefore define the same
counterexample over every field of characteristic zero.  We work over \(\C\)
to match the standard formulations of the Poisson and Jacobian conjectures.

\section{Poisson endomorphisms and symplectic maps}\label{sec:preliminaries}

From now on, order the generators as \((x,q,p,z)\) and write
\begin{equation}\label{eq:poisson-bracket}
  \{f,g\}=f_p g_x-f_x g_p+f_z g_q-f_q g_z.
\end{equation}
Thus \(\{p,x\}=\{z,q\}=1\), and the corresponding symplectic form is
\begin{equation}\label{eq:omega}
  \omega=dx\wedge dp+dq\wedge dz.
\end{equation}
We record the elementary criterion used below.  It is the four-variable case
of \cite[Theorem~2.1 and Corollary~2.1]{AdjamagboVanDenEssen}.

\begin{lemma}[Symplectic criterion]\label{lem:symplectic-criterion}
Let
\[
  F=(F_1,F_2,F_3,F_4)\in\C[x,q,p,z]^4.
\]
The substitution
\(
  (x,q,p,z)\mapsto(F_1,F_2,F_3,F_4)
\)
defines a Poisson endomorphism for \eqref{eq:poisson-bracket} if and only if
\begin{equation}\label{eq:symplectic-criterion}
  dF_1\wedge dF_3+dF_2\wedge dF_4
  =dx\wedge dp+dq\wedge dz.
\end{equation}
Equivalently,
\[
  \{F_3,F_1\}=1,\qquad \{F_4,F_2\}=1,
\]
and the four mixed brackets vanish.  Whenever these conditions hold,
\begin{equation}\label{eq:symplectic-det-one}
  \det JF=1.
\end{equation}
\end{lemma}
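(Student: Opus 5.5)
The plan is to express everything through the Poisson matrix of the bracket \eqref{eq:poisson-bracket}. Order the variables as \(u=(x,q,p,z)\) and write \(\{f,g\}=\nabla f^{\mathsf T}\,\Omega\,\nabla g\), where \(\Omega\) is the constant skew matrix determined by \(\{p,x\}=\{z,q\}=1\). Every Poisson bracket between generators is then an entry of \(\Omega\): \(\{u_i,u_j\}=\Omega_{ij}\). Because \(\{\cdot,\cdot\}\) is a biderivation, a substitution \(\Phi\) is a Poisson endomorphism if and only if \(\{F_i,F_j\}=\Phi(\{u_i,u_j\})\) for all \(i,j\). Here one direction is trivial, and for the converse one applies the chain rule: for \(f\circ F, g\circ F\) we have \(\{f\circ F,g\circ F\}=\sum_{i,j}(\partial_i f\circ F)(\partial_j g\circ F)\{F_i,F_j\}\).

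The right-hand sides \(\Phi(\{u_i,u_j\})\) are constants. So the condition reads
\[
JF\,\Omega\,JF^{\mathsf T}=\Omega,
\]
and written out it says \(\{F_3,F_1\}=1\), \(\{F_4,F_2\}=1\), and the remaining four mixed brackets vanish.

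Next I would translate this to forms. Let \(\Omega^{-1}\) be the matrix of \(\omega\), suitably signed so that \(\omega=\tfrac12\sum_{ij}(\Omega^{-1})_{ij}\,du_i\wedge du_j\) with the conventions fixed above. The pullback \(F^*\omega=dF_1\wedge dF_3+dF_2\wedge dF_4\) has matrix \(JF^{\mathsf T}\Omega^{-1}JF\). The identity \(JF\,\Omega\,JF^{\mathsf T}=\Omega\) is equivalent to \(JF^{\mathsf T}\Omega^{-1}JF=\Omega^{-1}\). For the first implication, invert both sides: this requires \(JF\) to be invertible, and that follows from taking determinants, since \(\det(JF)^2\det\Omega=\det\Omega\). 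For the converse the same argument applies in reverse. This yields \eqref{eq:symplectic-criterion}.

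The step needing the most care is sign bookkeeping. One must confirm that the bracket convention \(\{p,x\}=1\) matches \(\omega=dx\wedge dp+dq\wedge dz\), as opposed to its negative. This can be checked on \(F=\id\) and on the single pair \((x,p)\).

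For \eqref{eq:symplectic-det-one}, the computation above only gives \(\det JF=\pm1\) pointwise, so I would use the Pfaffian instead. The top power \(\omega\wedge\omega=2\,dx\wedge dp\wedge dq\wedge dz\) is a nonzero multiple of the volume form. Pulling back gives \(F^*(\omega\wedge\omega)=(\det JF)\,\omega\wedge\omega\). Since \(F^*\omega=\omega\), this forces \(\det JF=1\) exactly.
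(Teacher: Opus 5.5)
Your proof is correct and follows essentially the same route as the paper. Both write the generator brackets as $JF\,\Omega\,JF^{\mathsf T}=\Omega$, where your $\Omega$ is the paper's $\Omega_0^{-1}$, and pass to the pulled-back form by inverting $JF$. Both then use the biderivation property to reduce to generators and fix the sign of $\det JF$ by comparing $\omega\wedge\omega$ with its pullback. The only differences are cosmetic: you get invertibility of $JF$ from $(\det JF)^2=1$ rather than from the explicit left inverse $\Omega_0^{-1}(JF)^{\mathsf T}\Omega_0$, and your sign check comes out clean, since here $\Omega^{-1}=-\Omega$ is exactly the matrix of $\omega$.
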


\begin{proof}
Let \(X=(x,q,p,z)^{\mathsf T}\) and
\[
  \Omega_0=
  \begin{pmatrix}
    0&0&1&0\\
    0&0&0&1\\
    -1&0&0&0\\
    0&-1&0&0
  \end{pmatrix}.
\]
Then \(\Omega_0\) is the matrix of \(\omega\), while the matrix of the generator
brackets is \(\Omega_0^{-1}=-\Omega_0\).  Equation
\eqref{eq:symplectic-criterion} is
\[
  (JF)^{\mathsf T}\Omega_0(JF)=\Omega_0.
\]
It gives the left inverse \(\Omega_0^{-1}(JF)^{\mathsf T}\Omega_0\) for \(JF\).
Since the coefficient ring is commutative and the matrix is square, \(JF\) is
invertible.  Taking inverses gives
\[
  (JF)\Omega_0^{-1}(JF)^{\mathsf T}=\Omega_0^{-1},
\]
whose entries are exactly the generator-bracket identities.  Since the
Poisson bracket is a biderivation, preservation on generators is equivalent
to preservation on the whole polynomial algebra.

Finally, in the coordinate order \((x,q,p,z)\),
\[
  \omega^2=-2\,dx\wedge dq\wedge dp\wedge dz,
\]
whereas the square of the left side of
\eqref{eq:symplectic-criterion} is
\[
  -2(\det JF)\,dx\wedge dq\wedge dp\wedge dz.
\]
Thus \(\det JF=1\).
\end{proof}

\section{The explicit polynomials}\label{sec:construction}

Set
\begin{equation}\label{eq:a-B-beta}
  a=1-3xq,
  \qquad
  B=3x^2p+2az,
  \qquad
  \beta=B-9q^2,
\end{equation}
and then define
\begin{equation}\label{eq:y-u}
  y=q-\frac{x\beta}{3},
  \qquad
  u=xy.
\end{equation}
The three core polynomials are
\begin{align}
  R&=2x-3x^2y-x^3\beta,\label{eq:R-def}\\
  S&=y+3x(1+u)^2\beta+3xy^2(4+3u),\label{eq:S-def}\\
  T&=-\frac12\left((1+u)^3\beta
      +y^2(1+u)(4+3u)\right).\label{eq:T-def}
\end{align}
The uncorrected fourth coordinate is
\begin{equation}\label{eq:D0-def}
  D_0=\frac{1+3xq}{2}\,p-3q^2z.
\end{equation}
Define the Hamiltonian correction
\begin{align}
  H={}&\frac{y^4}{20}(18u^2+78u+125)\notag\\
     &+\frac{3\beta y^2}{10}(u^3+5u^2+10u-5)\notag\\
     &-\frac{\beta^2}{6}(9u+2)-\frac{x^2\beta^3}{6},
     \label{eq:H-def}
\end{align}
and put
\begin{equation}\label{eq:D-def}
  D=D_0+H.
\end{equation}
All displayed expressions are polynomials over \(\Q\).  Their expanded sizes
are included for reference.

\begin{center}
\begin{tabular}{@{}lrrrrrrr@{}}
\toprule
polynomial & \(\beta\) & \(y\) & \(R\) & \(S\) & \(T\) & \(H\) & \(D\)\\
\midrule
nonzero terms & 4 & 5 & 2 & 22 & 47 & 137 & 139\\
total degree  & 3 & 4 & 3 & 11 & 15 & 23 & 23\\
\bottomrule
\end{tabular}
\end{center}

The first coordinate simplifies drastically.

\begin{lemma}\label{lem:R-factorization}
In the original variables \((x,q,p,z)\),
\begin{equation}\label{eq:R-factorization}
  R=x(2-3xq).
\end{equation}
\end{lemma}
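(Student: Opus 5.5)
Direct substitution suffices. By \eqref{eq:R-def}, $R=x\bigl(2-3xy-x^2\beta\bigr)$, so the whole lemma reduces to computing the combination $3xy+x^2\beta$ in the original variables. The plan is to eliminate $y$ using \eqref{eq:y-u} before expanding anything. Since $y=q-x\beta/3$, we have
\[
  3xy=3xq-x^2\beta,
\]
and therefore
\[
  3xy+x^2\beta=3xq .
\]
The terms involving $\beta$ cancel identically. This means we never need to expand $\beta=3x^2p+2az-9q^2$ or $a=1-3xq$.

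Substituting back gives
\[
  R=x\bigl(2-(3xy+x^2\beta)\bigr)=x(2-3xq),
\]
which is \eqref{eq:R-factorization}. This agrees with the table entry of two nonzero terms and total degree three.

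The only point needing care is to keep $\beta$ as an opaque symbol and not expand it. The defining relation for $y$ was designed exactly to absorb the $x^3\beta$ term, so the cancellation is immediate and there is no real obstacle. As a sanity check, the relation $R=2x-3x^2y-x^3\beta$ is just $\mathcal R(X,Y,W)$ from \eqref{eq:announced-map} evaluated at $(X,Y,W)=(x,y,\beta)$. The lemma then says that this source coordinate change turns the first core coordinate into the simple cubic $x(2-3xq)$.
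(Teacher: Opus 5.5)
Your proposal is correct and is essentially the paper's own proof. The paper also substitutes $y=q-x\beta/3$ into the defining formula for $R$ and observes that the $\beta$-terms cancel, leaving $2x-3x^2q$; factoring out $x$ first, as you do, is only a cosmetic difference.
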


\begin{proof}
Using \(y=q-x\beta/3\),
\[
  R
  =2x-3x^2\left(q-\frac{x\beta}{3}\right)-x^3\beta
  =2x-3x^2q.
\]
\end{proof}

We henceforth use \(\Phi\) for the algebra endomorphism in
\eqref{eq:Phi-intro} and \(\Phi_{\mathrm{pt}}=(R,T,D,S)\) for its associated
point map.

\section{A polynomial source coordinate system}\label{sec:source-coordinates}

The choice of \(D_0\) makes \((x,y,\beta,D_0)\) a polynomial coordinate
system.  Temporarily put
\begin{equation}\label{eq:QACM}
  Q=y+\frac{x\beta}{3},
  \qquad
  A=1-3xQ,
  \qquad
  C=\frac{1+3xQ}{2},
  \qquad
  M=\beta+9Q^2.
\end{equation}
After substituting \eqref{eq:a-B-beta}--\eqref{eq:y-u}, one has \(Q=q\),
\(A=a\), and
\begin{equation}\label{eq:source-linear-system}
  M=3x^2p+2Az,
  \qquad
  D_0=Cp-3Q^2z.
\end{equation}

\begin{proposition}[Polynomial source coordinates]\label{prop:source-automorphism}
The map
\begin{equation}\label{eq:Psi0-def}
  \Psi_0:\Aff^4\longrightarrow\Aff^4,
  \qquad
  (x,q,p,z)\longmapsto(x,y,\beta,D_0),
\end{equation}
is a polynomial automorphism.  Its inverse is
\begin{equation}\label{eq:Psi0-inverse}
\begin{split}
  q&=Q=y+\frac{x\beta}{3},\\
  p&=3Q^2(\beta+9Q^2)+2(1-3xQ)D_0,\\
  z&=\frac{1+3xQ}{2}(\beta+9Q^2)-3x^2D_0.
\end{split}
\end{equation}
Moreover,
\begin{equation}\label{eq:Psi0-det}
  \det J\Psi_0=-1.
\end{equation}
The map
\begin{equation}\label{eq:Psi-def}
  \Psi:(x,q,p,z)\longmapsto(x,y,\beta,D)
\end{equation}
is also a polynomial automorphism, with inverse obtained from
\eqref{eq:Psi0-inverse} by replacing \(D_0\) with \(D-H(x,y,\beta)\).
\end{proposition}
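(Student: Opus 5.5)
The plan is to exhibit the inverse explicitly and check both compositions, using the triangular structure of the substitution. The key observation is that \((x,y,\beta)\) determines \(q\) directly: since \(y=q-x\beta/3\), we get \(q=y+x\beta/3=Q\). Hence \(A=a\) and \(M=\beta+9q^2=B\). So, given \((x,y,\beta,D_0)\), the pair \((p,z)\) solves the linear system \eqref{eq:source-linear-system},
\[
  \begin{pmatrix}3x^2 & 2A\\ C & -3Q^2\end{pmatrix}
  \begin{pmatrix}p\\ z\end{pmatrix}
  =\begin{pmatrix}M\\ D_0\end{pmatrix},
\]
with coefficients that are polynomial in \((x,y,\beta)\).

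The crucial step is to compute the determinant of this system:
\[
  -9x^2Q^2-2AC=-9x^2Q^2-(1-3xQ)(1+3xQ)=-1.
\]
This is exactly why \(D_0\) was chosen with the factor \((1+3xq)/2\). Cramer's rule then gives
\[
  p=3Q^2M+2AD_0,\qquad z=CM-3x^2D_0,
\]
which are the formulas in \eqref{eq:Psi0-inverse}. These are polynomial in \((x,y,\beta,D_0)\).

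To finish the automorphism claim, I verify both compositions.
\begin{itemize}
\item Composing \(\Psi_0\) with the proposed inverse recovers \((x,q,p,z)\) by the computation above, since \((p,z)\) is the unique solution of the system.
\item In the other order, starting from arbitrary \((x,y,\beta,D_0)\), define \(q,p,z\) by \eqref{eq:Psi0-inverse}. Then \(y\) is recovered because \(q-x\beta/3=y\). Since the defined \((p,z)\) solve the linear system, substituting back gives \(3x^2p+2az=M\), so \(\beta=M-9q^2\) is recovered. Likewise \(D_0\) is returned.
\end{itemize}
This argument is formal identity checking with no genuine obstacle; the only real content is the determinant \(-1\) computation.

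For the Jacobian determinant, I would factor \(\Psi_0\) as the composite
\[
  (x,q,p,z)\mapsto(x,q,B,D_0)\mapsto(x,q,\beta,D_0)\mapsto(x,y,\beta,D_0).
\]
\begin{itemize}
\item The last two steps are triangular with unit diagonal, so their determinants are \(1\).
\item For the first step, only \((p,z)\mapsto(B,D_0)\) changes, at fixed \((x,q)\). Its Jacobian is linear in \((p,z)\) with determinant \(-1\) by the computation above.
\end{itemize}
Multiplying gives \(\det J\Psi_0=-1\).

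Finally, \(\Psi\) is \(\Psi_0\) followed by the triangular map \((x,y,\beta,w)\mapsto(x,y,\beta,w+H(x,y,\beta))\). This map is an automorphism because \(H\) depends only on the first three coordinates, and its inverse subtracts \(H\). Therefore \(\Psi\) is an automorphism, and its inverse is obtained from \eqref{eq:Psi0-inverse} by substituting \(D_0=D-H(x,y,\beta)\).
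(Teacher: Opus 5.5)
Your proposal is correct and follows essentially the same route as the paper. The determinant $-1$ of the $2\times2$ linear system gives the polynomial inverse, $\det J\Psi_0=-1$ comes from that block together with unit-determinant triangular shears, and $\Psi$ is handled by the shear $D_0\mapsto D_0+H(x,y,\beta)$. Two small differences from the paper: you split the step $(p,z)\mapsto(\beta,D_0)$ into $B$ followed by $\beta=B-9q^2$, and you check both compositions explicitly. In that second composition check, recover $\beta$ before $y$, since the recomputed $y$ is $q-x\beta'/3$ with $\beta'$ the recomputed value of $\beta$.
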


\begin{proof}
The equations in \eqref{eq:source-linear-system} form the linear system
\[
  \begin{pmatrix}
    3x^2&2A\\
    C&-3Q^2
  \end{pmatrix}
  \begin{pmatrix}p\\z\end{pmatrix}
  =
  \begin{pmatrix}M\\D_0\end{pmatrix}.
\]
Its determinant is
\[
  -9x^2Q^2-2AC
  =-9x^2Q^2-(1-3xQ)(1+3xQ)
  =-1.
\]
Consequently,
\[
  \begin{pmatrix}p\\z\end{pmatrix}
  =
  \begin{pmatrix}
    3Q^2&2A\\
    C&-3x^2
  \end{pmatrix}
  \begin{pmatrix}M\\D_0\end{pmatrix},
\]
which is precisely \eqref{eq:Psi0-inverse}.

For the determinant, first pass from \((x,q,p,z)\) to
\((x,q,\beta,D_0)\).  At fixed \((x,q)\), the relevant block is
\[
  \begin{pmatrix}
    3x^2&2(1-3xq)\\[2pt]
    (1+3xq)/2&-3q^2
  \end{pmatrix},
\]
whose determinant is \(-1\).  The change
\(q\mapsto y=q-x\beta/3\) is a triangular shear of determinant one.  Hence
\eqref{eq:Psi0-det} holds.  Finally,
\((x,y,\beta,D_0)\mapsto(x,y,\beta,D_0+H)\) is another triangular shear,
with inverse \(D_0=D-H\), proving the assertion for \(\Psi\).
\end{proof}

In particular, \(x,y,\beta\) are algebraically independent, so calculations
in \(\Q[X,Y,W]\) may be transported to the subalgebra
\(\Q[x,y,\beta]\subset\Pcal_2\) without ambiguity.

\section{The symplectic calculation}\label{sec:symplectic-calculation}

We compute \(\omega\) in the coordinates \((x,y,\beta,D_0)\).  Continue to
use \(Q,A,C,M\) from \eqref{eq:QACM}.  From
\eqref{eq:Psi0-inverse},
\begin{equation}\label{eq:pz-inverse-short}
  p=3Q^2M+2AD_0,
  \qquad
  z=CM-3x^2D_0.
\end{equation}
The required differentials are
\begin{align}
  dQ&=dy+\frac{\beta}{3}\,dx+\frac{x}{3}\,d\beta,
  \label{eq:dQ}\\
  dp&=-6D_0Q\,dx
     +\bigl(6Q(\beta+18Q^2)-6D_0x\bigr)dQ\notag\\
     &\hspace{2.6cm}+3Q^2d\beta+2A\,dD_0,
     \label{eq:dp}\\
  dz&=\left(\frac32QM-6xD_0\right)dx
     +\left(\frac32xM+18QC\right)dQ\notag\\
     &\hspace{2.6cm}+C\,d\beta-3x^2dD_0.
     \label{eq:dz}
\end{align}
For example,
\[
  d(3Q^2M)=3Q^2d\beta+6Q(\beta+18Q^2)dQ
\]
and
\[
  d(2AD_0)=2A\,dD_0-6D_0Q\,dx-6D_0x\,dQ,
\]
which gives \eqref{eq:dp}; the formula for \(dz\) follows similarly from
\(dC=\frac32(Q\,dx+x\,dQ)\) and \(dM=d\beta+18Q\,dQ\).

Substituting \eqref{eq:dp} and \eqref{eq:dz} into
\(\omega=dx\wedge dp+dQ\wedge dz\) gives
\begin{align}
  \omega={}&\frac92Q(\beta+21Q^2)\,dx\wedge dQ
       +3Q^2\,dx\wedge d\beta
       +C\,dQ\wedge d\beta\notag\\
      &+2A\,dx\wedge dD_0-3x^2\,dQ\wedge dD_0.
      \label{eq:omega-Q}
\end{align}
Indeed, the coefficient of \(dx\wedge dQ\) is
\[
  6Q(\beta+18Q^2)-6D_0x
  -\left(\frac32QM-6xD_0\right)
  =\frac92Q(\beta+21Q^2).
\]
By Lemma~\ref{lem:R-factorization},
\begin{equation}\label{eq:R-Q}
  R=x(2-3xQ),
  \qquad
  dR=2A\,dx-3x^2dQ.
\end{equation}
Thus the last two terms of \eqref{eq:omega-Q} equal
\(dR\wedge dD_0\).  Replacing \(dQ\) using \eqref{eq:dQ}, we obtain
\begin{equation}\label{eq:omega-split}
  \omega=dR\wedge dD_0+\Theta,
\end{equation}
where
\begin{equation}\label{eq:Theta-def}
  \Theta=A_1\,dx\wedge dy+A_2\,dx\wedge d\beta
         +A_3\,dy\wedge d\beta
\end{equation}
with
\begin{align}
  A_1&=\frac92Q(\beta+21Q^2),\label{eq:A1}\\
  A_2&=3Q^2+\frac{\beta(1+3xQ)}6
            +\frac32xQ(\beta+21Q^2),\label{eq:A2}\\
  A_3&=\frac{1+3xQ}{2}.\label{eq:A3}
\end{align}

The correction \(H\) was chosen so that \(\Theta\) is exactly the remaining
part of the target symplectic form.

\begin{proposition}[Coefficient identity]\label{prop:coefficient-identity}
For the polynomials \(R,S,T,H\) in
\eqref{eq:R-def}--\eqref{eq:H-def}, regarded as elements of
\(\Q[x,y,\beta]\),
\begin{equation}\label{eq:two-form-identity}
  dR\wedge dH+dT\wedge dS=\Theta.
\end{equation}
\end{proposition}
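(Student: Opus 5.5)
Everything in \eqref{eq:two-form-identity} lives in \(\Q[x,y,\beta]\), which by Proposition~\ref{prop:source-automorphism} is a polynomial ring in the independent variables \(x,y,\beta\). So both sides are \(2\)-forms in three variables, and the identity is equivalent to three polynomial identities, one for each coefficient of \(dx\wedge dy\), \(dx\wedge d\beta\), \(dy\wedge d\beta\). Write \(J_{uv}(f,g)=f_ug_v-f_vg_u\) for partial derivatives in the coordinates \((x,y,\beta)\). The claim is then
\[
  J_{xy}(R,H)+J_{xy}(T,S)=A_1,\quad
  J_{x\beta}(R,H)+J_{x\beta}(T,S)=A_2,\quad
  J_{y\beta}(R,H)+J_{y\beta}(T,S)=A_3,
\]
where \(A_1,A_2,A_3\) are rewritten in \((x,y,\beta)\) by substituting \(Q=y+x\beta/3\).

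I would organize the computation around the core map. Using \eqref{eq:R-def}, the form \(R=2x-3x^2y-x^3\beta\) is exactly \(\mathcal R(x,y,\beta)\) from \eqref{eq:announced-map}. Likewise \(S=\Sigma(x,y,\beta)\) and \(T=-\Pi(x,y,\beta)/2\), so \((R,T,S)=G\) in the notation \eqref{eq:normalized-core-intro}.

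First I would compute \(dT\wedge dS\) directly. To keep the expressions small, I would differentiate in the variables \(x\), \(y\), \(u=xy\) where convenient, or just expand with \(u=xy\).

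Second, I would compute the three coefficients of \(dR\wedge dH\), using
\[
  dR=(2-6xy-3x^2\beta)\,dx-3x^2\,dy-x^3\,d\beta .
\]
Then
\[
\begin{aligned}
  J_{xy}(R,H)&=(2-6xy-3x^2\beta)H_y+3x^2H_x,\\
  J_{x\beta}(R,H)&=(2-6xy-3x^2\beta)H_\beta+x^3H_x,\\
  J_{y\beta}(R,H)&=-3x^2H_\beta+x^3H_y .
\end{aligned}
\]
Since \(H\) is an explicit polynomial of degree at most \(3\) in \(\beta\), each identity becomes a comparison of polynomials in \(x,y,\beta\). I would organize the comparison by powers of \(\beta\): for each power \(\beta^k\), \(k=0,\dots,4\) or so, the coefficient is a polynomial in \(x\) and \(y\), and it is cleanest to write it in terms of \(y\) and \(u=xy\), matching how \(H\), \(S\), \(T\) are presented.

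A useful consistency check comes from closedness. \(\Theta=\omega-dR\wedge dD_0\) is closed, and so is the left side, so the three coefficient identities are not independent modulo exact terms. Both sides are closed \(2\)-forms in three variables, hence exact: the left side equals \(d(R\,dH+T\,dS)\), and \(\Theta=d\lambda\) for a computable \(1\)-form \(\lambda\). One could therefore reduce to showing that \(R\,dH+T\,dS-\lambda\) is closed, but checking coefficients directly is just as easy.

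The main obstacle is sheer bookkeeping: \(H\) has 137 terms and degree \(23\), and \(T\) has 47. I would expect to verify the identities by a computer algebra expansion in \(\Q[x,y,\beta]\), presenting in the text the \(\beta\)-graded coefficient identities in the variables \(y,u\).

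The simplest identity to start with is the \(dy\wedge d\beta\) one, whose target is only \(A_3=(1+3xy+x^2\beta)/2\). After that I would do \(A_1\) and finally \(A_2\), which is the longest.
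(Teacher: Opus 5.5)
Your plan is correct and is essentially the paper's proof. The paper likewise reduces \eqref{eq:two-form-identity} to the three coefficient identities in $\Q[x,y,\beta]$, computes $C_{xy},C_{x\beta},C_{y\beta}$ by direct differentiation (the derivatives are listed in Appendix~\ref{app:derivative-certificate}), and matches them with $A_1,A_2,A_3$ after substituting $3Q=x\beta+3y$. One correction to your estimate of the bookkeeping: the figures of $137$ terms and degree $23$ describe the expansion in $(x,q,p,z)$, whereas in $(x,y,\beta)$ the polynomial $H$ has only ten monomials. The check, organized by powers of $\beta$ in $y$ and $u=xy$ exactly as you propose, is therefore quite manageable by hand.
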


\begin{proof}
For \(f,g\in\Q[x,y,\beta]\), the coefficients of \(df\wedge dg\) are
\begin{align*}
  [dx\wedge dy](df\wedge dg)&=f_xg_y-f_yg_x,\\
  [dx\wedge d\beta](df\wedge dg)&=f_xg_\beta-f_\beta g_x,\\
  [dy\wedge d\beta](df\wedge dg)&=f_yg_\beta-f_\beta g_y.
\end{align*}
Direct differentiation and collection give
\begin{align}
 [dx\wedge dy](dR\wedge dH+dT\wedge dS)
   &={\frac12}(x\beta+3y)
      \bigl(7x^2\beta^2+42xy\beta+3\beta+63y^2\bigr),
      \label{eq:coef-xy}\\
 [dx\wedge d\beta](dR\wedge dH+dT\wedge dS)
   &=\frac16\bigl(7x^4\beta^3+63x^3y\beta^2+6x^2\beta^2\notag\\
   &\hspace{1.25cm}+189x^2y^2\beta+24xy\beta+\beta
      +189xy^3+18y^2\bigr),
      \label{eq:coef-xbeta}\\
 [dy\wedge d\beta](dR\wedge dH+dT\wedge dS)
   &=\frac12(x^2\beta+3xy+1).
      \label{eq:coef-ybeta}
\end{align}
Appendix~\ref{app:derivative-certificate} records every derivative used in
this collection.

Since \(3Q=x\beta+3y\), the second factor in
\eqref{eq:coef-xy} is
\[
  7(x\beta+3y)^2+3\beta
  =3(\beta+21Q^2),
\]
so \eqref{eq:coef-xy} is \(A_1\).  Expanding
\[
  6A_2
  =18Q^2+\beta(1+3xQ)+9xQ(\beta+21Q^2)
\]
after substituting \(3Q=x\beta+3y\) gives the numerator in
\eqref{eq:coef-xbeta}.  Finally,
\[
  \frac12(x^2\beta+3xy+1)
  =\frac12(1+3xQ)=A_3.
\]
The three coefficients agree with \eqref{eq:Theta-def}, proving
\eqref{eq:two-form-identity}.
\end{proof}

\begin{theorem}[Exact Poisson identities]\label{thm:poisson-identities}
The polynomials in Section~\ref{sec:construction} satisfy
\begin{equation}\label{eq:six-brackets}
\begin{gathered}
  \{D,R\}=1,\qquad \{S,T\}=1,\\
  \{R,S\}=\{R,T\}=\{D,S\}=\{D,T\}=0.
\end{gathered}
\end{equation}
Consequently, \(\Phi\) is a Poisson endomorphism and
\begin{equation}\label{eq:Phi-det-one}
  \det J\Phi_{\mathrm{pt}}=1.
\end{equation}
\end{theorem}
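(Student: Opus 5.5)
The plan is to reduce the six bracket identities to the single two-form identity of Lemma~\ref{lem:symplectic-criterion}, applied with \(F=(F_1,F_2,F_3,F_4)=(R,T,D,S)\). Concretely, I will show
\[
  dR\wedge dD+dT\wedge dS=dx\wedge dp+dq\wedge dz=\omega .
\]
The lemma then yields \(\{D,R\}=1\), \(\{S,T\}=1\), and the vanishing of the four mixed brackets. It also yields \(\det J\Phi_{\mathrm{pt}}=1\) via \eqref{eq:symplectic-det-one}. The lemma's mixed-bracket statement is exactly \(\{R,S\}=\{R,T\}=\{D,S\}=\{D,T\}=0\), up to antisymmetry. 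Since Poisson brackets are biderivations, this also shows that \(\Phi\) is a Poisson endomorphism.

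To prove the two-form identity, I will work in the polynomial source coordinates \((x,y,\beta,D_0)\) of Proposition~\ref{prop:source-automorphism}. Equality of differential forms on \(\Aff^4\) can be checked in any polynomial coordinate system, and the proposition shows these are global coordinates. Since \(D=D_0+H\) with \(H\in\Q[x,y,\beta]\), we have
\[
  dR\wedge dD+dT\wedge dS
  =dR\wedge dD_0+\bigl(dR\wedge dH+dT\wedge dS\bigr).
\]
Here \(R,T,S,H\) all lie in \(\Q[x,y,\beta]\): \(R\) by \eqref{eq:R-def} and \(T,S,H\) by their definitions. By Proposition~\ref{prop:coefficient-identity}, the bracketed term equals \(\Theta\). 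By the splitting \eqref{eq:omega-split}, \(\omega=dR\wedge dD_0+\Theta\). Combining the two gives the desired identity.

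The only point needing care is consistency of the ingredients. The computation \eqref{eq:omega-Q}--\eqref{eq:omega-split} expresses \(\omega\) in terms of \(dx,dQ,d\beta,dD_0\), then substitutes \(dQ\) from \eqref{eq:dQ}. For \(\Theta\) to be read off correctly, the \(dR\wedge dD_0\) term must be split off \emph{before} \(dQ\) is rewritten, and it is, since \eqref{eq:R-Q} gives \(dR\) in terms of \(dx,dQ\) directly. The main obstacle, namely the heavy coefficient verification, has already been isolated into Proposition~\ref{prop:coefficient-identity}, so the theorem itself is a short assembly.

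I will also record the orientation conventions so that the signs match. The criterion form is \(dF_1\wedge dF_3+dF_2\wedge dF_4\). With \(F_1=R\), \(F_2=T\), \(F_3=D\), \(F_4=S\), this is \(dR\wedge dD+dT\wedge dS\), matching \(\omega=dx\wedge dp+dq\wedge dz\) as required.
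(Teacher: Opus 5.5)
Your proposal is correct and is essentially the paper's proof: it combines the splitting \eqref{eq:omega-split}, Proposition~\ref{prop:coefficient-identity}, and \(D=D_0+H\) to get \(\omega=dR\wedge dD+dT\wedge dS\), then applies Lemma~\ref{lem:symplectic-criterion} with \((F_1,F_2,F_3,F_4)=(R,T,D,S)\). Your caveat that \(dR\wedge dD_0\) must be split off before \(dQ\) is rewritten is harmless but unnecessary: \(dR\wedge dD_0\) is an intrinsic two-form, so the order of substitution cannot change it.
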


\begin{proof}
By \eqref{eq:omega-split}, Proposition~\ref{prop:coefficient-identity}, and
\(D=D_0+H\),
\[
  \omega
  =dR\wedge dD_0+dR\wedge dH+dT\wedge dS
  =dR\wedge dD+dT\wedge dS.
\]
Apply Lemma~\ref{lem:symplectic-criterion} with
\((F_1,F_2,F_3,F_4)=(R,T,D,S)\).
\end{proof}

There is also a short independent check of the first canonical bracket.  From
\eqref{eq:R-factorization} and \eqref{eq:D0-def},
\[
  R_x=2(1-3xq),\qquad R_q=-3x^2,
  \qquad
  (D_0)_p=\frac{1+3xq}{2},\qquad (D_0)_z=-3q^2.
\]
Hence
\begin{equation}\label{eq:D0R-direct}
  \{D_0,R\}
  =\frac{1+3xq}{2}\,2(1-3xq)+(-3q^2)(-3x^2)=1.
\end{equation}
Proposition~\ref{prop:induced-bracket} below shows that \(R\) is a Casimir on
\(\C[x,y,\beta]\); since \(H\) lies in that subalgebra, \(\{H,R\}=0\).
Thus \(\{D,R\}=1\).

\section{The three-dimensional core}\label{sec:three-dimensional-core}

This section records the structural relation with
\eqref{eq:announced-map}.  Regard \(R,T,S\) as polynomials in the independent
variables \((x,y,\beta)\), and set
\begin{equation}\label{eq:G-core}
  G=(R,T,S):\Aff^3_{x,y,\beta}\longrightarrow\Aff^3.
\end{equation}

\begin{proposition}\label{prop:core-jacobian}
One has
\begin{equation}\label{eq:core-jacobian}
  \det\frac{\partial(R,T,S)}{\partial(x,y,\beta)}=1.
\end{equation}
\end{proposition}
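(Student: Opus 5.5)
The plan is to read the determinant off the symplectic identity already proved, rather than expanding the $3\times3$ Jacobian directly. Work in the polynomial coordinate system $(x,y,\beta,D_0)$ of Proposition~\ref{prop:source-automorphism}. In these coordinates $R,T,S,H$ lie in $\Q[x,y,\beta]$, and $x,y,\beta$ are algebraically independent. Hence
\[
  dR\wedge dT\wedge dS=J\,dx\wedge dy\wedge d\beta,
  \qquad
  J=\det\frac{\partial(R,T,S)}{\partial(x,y,\beta)},
\]
and $J$ is exactly the determinant in \eqref{eq:core-jacobian}.

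First I compute $\omega\wedge\omega$ from the target side. By Theorem~\ref{thm:poisson-identities}, or more precisely its proof, $\omega=dR\wedge dD+dT\wedge dS$. Since $(dR\wedge dD)^2=(dT\wedge dS)^2=0$ and $2$-forms commute, this gives
\[
  \omega\wedge\omega=2\,dR\wedge dD\wedge dT\wedge dS=2\,dR\wedge dT\wedge dS\wedge dD.
\]
The second equality holds because moving $dD$ past the two $1$-forms $dT,dS$ costs no sign. Now write $dD=dD_0+dH$. Since $H\in\Q[x,y,\beta]$, the form $dH$ lies in the span of $dx,dy,d\beta$, so $dR\wedge dT\wedge dS\wedge dH$ is a $4$-form in three differentials and vanishes. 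Therefore
\[
  \omega\wedge\omega=2J\,dx\wedge dy\wedge d\beta\wedge dD_0.
\]

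Next I compute $\omega\wedge\omega$ from the source side. The proof of Lemma~\ref{lem:symplectic-criterion} gives $\omega^2=-2\,dx\wedge dq\wedge dp\wedge dz$. By \eqref{eq:Psi0-det}, $dx\wedge dy\wedge d\beta\wedge dD_0=(\det J\Psi_0)\,dx\wedge dq\wedge dp\wedge dz=-dx\wedge dq\wedge dp\wedge dz$. Hence
\[
  \omega\wedge\omega=2\,dx\wedge dy\wedge d\beta\wedge dD_0.
\]
Comparing the two expressions, and noting that $dx\wedge dy\wedge d\beta\wedge dD_0$ is a nowhere-vanishing volume form because $\Psi_0$ is an automorphism, gives $J=1$.

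As a cross-check, one can also use \eqref{eq:omega-split}. There $\omega^2=2\,dR\wedge dD_0\wedge\Theta$, since $\Theta\wedge\Theta=0$ for a $2$-form in three differentials. This equals $2\,(R_xA_3-R_yA_2+R_\beta A_1)\,dx\wedge dy\wedge d\beta\wedge dD_0$, where the sign of each term must be checked. This route would confirm the normalization independently.

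The only real obstacle is sign bookkeeping: the ordering in $\omega^2$, the sign $\det J\Psi_0=-1$, and the reordering of $dD$. I would double-check these by evaluating at a convenient point, for example $x=0$. There $R=0$ to first order in the sense that $dR=2\,dx$, and $A_3=\tfrac12$, so the cross-check gives $2\cdot\tfrac12=1$ directly.
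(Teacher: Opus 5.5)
Your argument is correct, but it runs in the opposite direction from the paper's. The paper proves the core determinant by a short self-contained computation. In the localization $\Q[x,x^{-1},w,\alpha]$, with $w=1+xy$ and $\alpha=2-3xy-x^2\beta$, one has $R=x\alpha$, $T=x^{-2}c$, $S=x^{-1}b$. The source change of variables contributes $-x^3$, the target Jacobian contributes $x^{-3}\Delta$ with $\Delta=-1$, and the product is $1$. That proof never touches $H$ or Proposition~\ref{prop:coefficient-identity}. This independence is what lets Section~\ref{sec:three-dimensional-core} rederive $\{S,T\}=1$ via Proposition~\ref{prop:induced-bracket} and present the factorization \eqref{eq:point-factorization} as a genuine second proof of $\det J\Phi_{\mathrm{pt}}=1$; the same localized formulas are then reused for the exact fiber. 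Your route needs no new computation. It is in effect that factorization read backwards, $1=\det J\Phi_{\mathrm{pt}}=(-1)\cdot J\cdot(-1)$, with the left side supplied by Theorem~\ref{thm:poisson-identities} and Lemma~\ref{lem:symplectic-criterion}. The price is that $J=1$ becomes logically downstream of the large coefficient identity involving $H$. With your proof in place, the paper's second proof of $\det J\Phi_{\mathrm{pt}}=1$ would be circular, and Section~\ref{sec:three-dimensional-core} would no longer be an independent check on Section~\ref{sec:symplectic-calculation}. Incidentally, your cross-check is already a complete proof on its own. Wedging Proposition~\ref{prop:coefficient-identity} with $dR$ gives $dR\wedge dT\wedge dS=dR\wedge\Theta=(R_xA_3-R_yA_2+R_\beta A_1)\,dx\wedge dy\wedge d\beta$. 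Using $R_x=2-6xQ-x^2\beta$, this coefficient collapses to $(1-3xQ)(1+3xQ)+9x^2Q^2=1$, bypassing both $\omega^2$ and $\det J\Psi_0$.
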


\begin{proof}
Put
\begin{equation}\label{eq:w-alpha}
  w=1+xy,
  \qquad
  \alpha=2-3xy-x^2\beta.
\end{equation}
Then
\begin{equation}\label{eq:source-jac-core}
  \det\frac{\partial(x,w,\alpha)}{\partial(x,y,\beta)}=-x^3.
\end{equation}
In the localization \(\Q[x,x^{-1},w,\alpha]\), define
\begin{equation}\label{eq:b-c}
  b=2+4w-3\alpha w^2,
  \qquad
  c=\frac12(\alpha w^3-w^2-w).
\end{equation}
Substitution into \eqref{eq:R-def}--\eqref{eq:T-def} gives
\begin{equation}\label{eq:RST-localized}
  R=x\alpha,
  \qquad
  T=x^{-2}c,
  \qquad
  S=x^{-1}b.
\end{equation}
Therefore
\begin{equation}\label{eq:target-jac-core}
  \det\frac{\partial(R,T,S)}{\partial(x,w,\alpha)}
  =x^{-3}\Delta,
\end{equation}
where
\[
  \Delta=
  \det
  \begin{pmatrix}
    \alpha&0&1\\
    -2c&c_w&c_\alpha\\
    -b&b_w&b_\alpha
  \end{pmatrix}.
\]
From \eqref{eq:b-c},
\[
  b_w=4-6\alpha w,
  \quad b_\alpha=-3w^2,
  \quad
  c_w=\frac12(3\alpha w^2-2w-1),
  \quad c_\alpha=\frac12w^3.
\]
Expansion along the first row gives
\begin{align*}
  \Delta
  &=\alpha(c_wb_\alpha-c_\alpha b_w)+bc_w-2cb_w\\
  &=-\frac{\alpha w^2}{2}(3\alpha w^2-2w-3)\\
  &\quad+\frac12(3\alpha^2w^4-2\alpha w^3-3\alpha w^2-2)\\
  &=-1.
\end{align*}
Combining \eqref{eq:source-jac-core} and
\eqref{eq:target-jac-core} proves \eqref{eq:core-jacobian} in the
localization.  Both sides are polynomials, so the identity holds in
\(\Q[x,y,\beta]\).
\end{proof}

The induced three-variable Poisson structure makes the role of \(R\)
transparent.

\begin{proposition}[Induced Jacobian Poisson bracket]\label{prop:induced-bracket}
On \(\C[x,y,\beta]\subset\Pcal_2\),
\begin{equation}\label{eq:induced-generator-brackets}
  \{x,y\}=x^3,
  \qquad
  \{x,\beta\}=-3x^2,
  \qquad
  \{y,\beta\}=-2+6xy+3x^2\beta.
\end{equation}
For all \(f,g\in\C[x,y,\beta]\),
\begin{equation}\label{eq:jacobian-bracket}
  \{f,g\}
  =-\det\frac{\partial(R,f,g)}{\partial(x,y,\beta)}.
\end{equation}
In particular, \(R\) is a Casimir of this three-variable Poisson algebra, and
\[
  \{S,T\}=1,
  \qquad
  \{R,S\}=\{R,T\}=0.
\]
\end{proposition}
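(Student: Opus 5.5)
We need three things: the generator brackets \eqref{eq:induced-generator-brackets}, the Jacobian formula \eqref{eq:jacobian-bracket}, and the consequences for \(S,T,R\).

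\emph{Generator brackets.} Compute directly in the original variables with \eqref{eq:poisson-bracket}. Since \(x\) depends only on \(x\), \(\{x,g\}=-g_p\) for every \(g\). From \(\beta=3x^2p+2(1-3xq)z-9q^2\) we get \(\beta_p=3x^2\), so \(\{x,\beta\}=-3x^2\). From \(y=q-x\beta/3\) we get \(y_p=-x^3\), so \(\{x,y\}=x^3\). For \(\{y,\beta\}\) use bilinearity and \(\{\beta,\beta\}=0\):
\[
\{y,\beta\}=\{q,\beta\}-\tfrac{\beta}{3}\{x,\beta\}.
\]
Here \(\{q,\beta\}=-\beta_z=-2(1-3xq)\), and \(\{x,\beta\}=-3x^2\). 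Hence
\[
\{y,\beta\}=-2+6xq+x^2\beta,
\]
and substituting \(q=y+x\beta/3\) gives \(-2+6xy+3x^2\beta\).

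\emph{Jacobian formula.} Both sides of \eqref{eq:jacobian-bracket} are biderivations of \(\C[x,y,\beta]\) that are antisymmetric in \(f,g\). The left side is a biderivation because the bracket of \(\Pcal_2\) is one and the subalgebra is closed under the bracket by the first step. The right side is a biderivation by multilinearity of the determinant in its rows. It therefore suffices to compare the two sides on the pairs \((x,y)\), \((x,\beta)\), \((y,\beta)\).

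Write \(R=2x-3x^2y-x^3\beta\) in the variables \((x,y,\beta)\). Then
\[
R_x=2-6xy-3x^2\beta,\qquad R_y=-3x^2,\qquad R_\beta=-x^3.
\]
For \((f,g)=(x,y)\), the determinant \(\partial(R,x,y)/\partial(x,y,\beta)\) equals \(R_\beta\) up to the permutation sign. The sign of the cyclic permutation is \(+1\), so the determinant is \(-x^3\), and the right side is \(x^3\), as required. For \((x,\beta)\), the determinant is \(-R_y=3x^2\), and the right side is \(-3x^2\), as required. For \((y,\beta)\), the determinant is \(R_x\), and the right side is \(-R_x=-2+6xy+3x^2\beta\), as required. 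The only real risk here is sign bookkeeping, and I would double-check these three signs.

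\emph{Consequences.} Putting \(f=R\) makes the determinant have two equal rows, so \(\{R,g\}=0\) for every \(g\); thus \(R\) is a Casimir, and in particular \(\{R,S\}=\{R,T\}=0\).

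For \(\{S,T\}\), apply \eqref{eq:jacobian-bracket}:
\[
\{S,T\}=-\det\frac{\partial(R,S,T)}{\partial(x,y,\beta)}=\det\frac{\partial(R,T,S)}{\partial(x,y,\beta)},
\]
since swapping two rows flips the sign. By Proposition~\ref{prop:core-jacobian} this determinant equals \(1\). This is consistent with Theorem~\ref{thm:poisson-identities}.
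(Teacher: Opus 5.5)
Your proposal is correct and takes essentially the same route as the paper. You get the generator brackets from \(\beta_p\), \(\beta_z\) and \(y=q-x\beta/3\), match them against \(-\det\partial(R,\cdot,\cdot)/\partial(x,y,\beta)\) on the three generator pairs (all three of your signs check out), and then read off the Casimir property and \(\{S,T\}=1\) from Proposition~\ref{prop:core-jacobian}. The only point left implicit is the algebraic independence of \(x,y,\beta\), noted after Proposition~\ref{prop:source-automorphism}, which makes the partial derivatives and your biderivation comparison well defined.
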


\begin{proof}
Since \(\beta_p=3x^2\) and \(\beta_z=2(1-3xq)\),
\[
  \{x,\beta\}=-3x^2,
  \qquad
  \{q,\beta\}=-2+6xq.
\]
Using \(y=q-x\beta/3\) gives the remaining identities in
\eqref{eq:induced-generator-brackets}.  Moreover,
\[
  R_x=2-6xy-3x^2\beta,
  \qquad
  R_y=-3x^2,
  \qquad
  R_\beta=-x^3.
\]
The bivector encoded by \eqref{eq:induced-generator-brackets} is
\[
  -R_\beta\,\partial_x\wedge\partial_y
  +R_y\,\partial_x\wedge\partial_\beta
  -R_x\,\partial_y\wedge\partial_\beta,
\]
which is exactly \eqref{eq:jacobian-bracket}.  The claims involving \(R\)
follow immediately.  Finally, Proposition~\ref{prop:core-jacobian} gives
\[
  \{S,T\}
  =-\det\frac{\partial(R,S,T)}{\partial(x,y,\beta)}=1.
\]
\end{proof}

The point map factors as
\begin{equation}\label{eq:point-factorization}
  \Phi_{\mathrm{pt}}
  =\sigma\circ(G\times\id)\circ\Psi,
  \qquad
  \sigma(r,t,s,e)=(r,t,e,s).
\end{equation}
Indeed, \(\Psi\) produces \((x,y,\beta,D)\), then
\(G\times\id\) produces \((R,T,S,D)\), and \(\sigma\) interchanges the last
two coordinates.  The determinant factors are
\[
  \det J\Psi=-1,
  \qquad
  \det J(G\times\id)=1,
  \qquad
  \det J\sigma=-1,
\]
which gives a second proof of \eqref{eq:Phi-det-one}.  This factorization also
explains the inherited three-point fiber; the two-form calculation in
Section~\ref{sec:symplectic-calculation} is what establishes the full Poisson
property.

\section{Noninvertibility and the exact three-point fiber}\label{sec:noninvertibility}

\begin{proposition}\label{prop:not-automorphism}
The Poisson endomorphism \(\Phi\) is not an algebra automorphism.
\end{proposition}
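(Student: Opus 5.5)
The plan is to show that the point map \(\Phi_{\mathrm{pt}}\) is not injective on \(\C\)-points. An algebra automorphism of \(\Pcal_2\) induces a bijective polynomial map of \(\Aff^4_\C\), since its inverse endomorphism gives an inverse point map. So it suffices to exhibit two distinct points with the same image. I will use the three points in \eqref{eq:three-points-intro} and check that each maps to \((0,1/8,0,0)\).

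Rather than evaluating the degree-23 polynomial \(D\) directly, I will use the factorization \eqref{eq:point-factorization}, \(\Phi_{\mathrm{pt}}=\sigma\circ(G\times\id)\circ\Psi\). The steps are as follows.
\begin{enumerate}
\item For each of the three source points, compute \(x\), \(a=1-3xq\), \(B=3x^2p+2az\), \(\beta=B-9q^2\), and \(y=q-x\beta/3\).
\item \emph{The point \((0,0,1/24,-1/8)\).} Here \(a=1\), \(B=-1/4\), \(\beta=-1/4\) and \(y=0\).
\item \emph{The point \((1,2/3,247/96,-89/64)\).} Here \(a=-1\), \(B=247/32+89/32=21/2\), \(\beta=21/2-4=13/2\), and \(y=2/3-13/6=-3/2\).
\item \emph{The third point.} By the symmetry \((x,q)\mapsto(-x,-q)\), it gives \(x=-1\), \(\beta=13/2\), \(y=3/2\).
\end{enumerate}
These are exactly the points \eqref{eq:announced-core-points} in the \((X,Y,W)=(x,y,\beta)\) variables. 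Since \(G=(\mathcal R,-\Pi/2,\Sigma)\), the three points have common \(G\)-image \((-1/4\cdot 0\ldots)\); more carefully, \((\mathcal R,\Pi,\Sigma)=(0,-1/4,0)\) after the coordinate reordering in \eqref{eq:announced-map}. I will recompute this directly from \eqref{eq:R-def}–\eqref{eq:T-def} to avoid relying on the external claim:
\begin{itemize}
\item At \((0,0,-1/4)\): \(R=0\), \(S=0\), and \(T=-\tfrac12(\beta)=1/8\).
\item At \((1,-3/2,13/2)\): \(u=-3/2\) and \(1+u=-1/2\). Then \(R=2+9/2-13/2=0\), \(S=-3/2+3\cdot\tfrac14\cdot\tfrac{13}{2}+3\cdot\tfrac94\cdot(-\tfrac12)=0\), and \(T=-\tfrac12\bigl(-\tfrac{13}{16}+\tfrac94\cdot(-\tfrac12)(-\tfrac12)\bigr)=1/8\).
\end{itemize}

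The remaining step is the \(D\)-coordinate, which must vanish at all three points. I will evaluate \(D=D_0+H\) using \(D_0=\frac{1+3xq}{2}p-3q^2z\) and \(H(x,y,\beta)\) from \eqref{eq:H-def}. At the first point, \(D_0=\tfrac1{24}\), \(H=-\tfrac{\beta^2}{6}\cdot 2=-\tfrac1{48}\cdot 2=-\tfrac1{24}\), so \(D=0\). At the second point, \(D_0=\tfrac32\cdot\tfrac{247}{96}+\tfrac43\cdot\tfrac{89}{64}\), and \(H\) is evaluated at \(u=-3/2\), \(y^2=9/4\), \(\beta=13/2\). I expect this rational arithmetic to be the only real obstacle. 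It is routine but error-prone, and I will organize it term by term in \eqref{eq:H-def}. The third point follows from the second by the parity symmetry: \(u\), \(y^2\), \(x^2\), \(\beta\) and \(xq\) are unchanged, while \(p\) and \(z\) are equal at the two points.

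Once all three images coincide, \(\Phi_{\mathrm{pt}}\) is not injective, so \(\Phi\) is not an automorphism. For the stronger statement that the fiber is exactly these three points, I would add one further argument: \(\Psi\) is bijective by Proposition~\ref{prop:source-automorphism}, so the fiber is in bijection with the \(G\)-fiber over \((0,1/8,0)\), and this can be solved via the localized form \eqref{eq:RST-localized}. That refinement is not needed for the noninjectivity claim.
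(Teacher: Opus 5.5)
Your route is valid in principle but differs from the paper's. The paper never evaluates any points. From Lemma~\ref{lem:R-factorization} alone, $\Phi(x)=x(2-3xq)$ is a product of two comaximal nonunits, so $\Pcal_2/(R)$ splits by the Chinese remainder theorem and is not a domain, whereas $\Pcal_2/(x)$ is a domain. An automorphism would carry the prime $(x)$ to a prime $(R)$, which is impossible. That argument uses no Poisson identities, no source coordinates and no arithmetic. Your argument proves the stronger geometric fact that $\Phi_{\mathrm{pt}}$ is not injective, which is essentially Proposition~\ref{prop:exact-fiber}. That is worth having, but it depends much more on computation.

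The computational dependence shows in your execution.
\begin{itemize}
\item At the first point, $D_0=\tfrac12\cdot\tfrac1{24}=\tfrac1{48}$; you dropped the factor $\tfrac12$. Also $H(0,0,-\tfrac14)=-\tfrac{\beta^2}{6}\cdot 2=-\tfrac1{48}$, since $\beta^2/6=\tfrac1{96}$. Your values $\tfrac1{24}$ and $-\tfrac1{24}$ are both off by a factor of $2$, and the two errors happen to cancel.
\item At the second point you stop exactly where the argument needs a number. The value does come out right. The four terms of \eqref{eq:H-def} are $\tfrac{7857}{640}$, $-\tfrac{34047}{640}$, $\tfrac{3887}{48}$ and $-\tfrac{2197}{48}$. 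They sum to $-\tfrac{1097}{192}$, which cancels your correct $D_0=\tfrac{1097}{192}$.
\end{itemize}

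More importantly, you do not need $H$ at all. You already cite $\Phi_{\mathrm{pt}}=\sigma\circ(G\times\id)\circ\Psi$, and $\Psi$ is bijective by Proposition~\ref{prop:source-automorphism}. Since $\sigma\circ(G\times\id)$ sends $(x,y,\beta,e)$ to $(R,T,e,S)$, the two distinct points $\Psi^{-1}(0,0,-\tfrac14,0)$ and $\Psi^{-1}(1,-\tfrac32,\tfrac{13}{2},0)$ both map to $(0,\tfrac18,0,0)$. This follows from your $R,S,T$ evaluations, which are correct. Evaluating $H$ is needed only to identify these preimages with the specific points in \eqref{eq:three-points-intro}, and the statement does not require that.
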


\begin{proof}
By Lemma~\ref{lem:R-factorization},
\[
  \Phi(x)=R=x(2-3xq).
\]
The two factors are comaximal because
\begin{equation}\label{eq:bezout}
  1=\frac12(2-3xq)+\frac{3q}{2}x.
\end{equation}
The Chinese remainder theorem therefore gives
\begin{align}
  \Pcal_2/(R)
  &\cong \Pcal_2/(x)\times\Pcal_2/(2-3xq)\notag\\
  &\cong \C[q,p,z]\times\C[x,x^{-1},p,z].
  \label{eq:CRT}
\end{align}
This ring is not a domain, so \((R)\) is not prime.  By contrast,
\(\Pcal_2/(x)\cong\C[q,p,z]\) is a domain.  An automorphism would send the
prime ideal \((x)\) to the prime ideal \((\Phi(x))=(R)\), contradicting
\eqref{eq:CRT}.
\end{proof}

Equivalently, a polynomial-ring automorphism sends an irreducible element to
an irreducible element up to a unit, whereas \(R\) is a product of two
nonunits.  The point map also has a completely explicit failure of
injectivity.

\begin{proposition}[Exact fiber]\label{prop:exact-fiber}
The fiber of \(\Phi_{\mathrm{pt}}\) over
\begin{equation}\label{eq:fiber-target}
  (R,T,D,S)=\left(0,\frac18,0,0\right)
\end{equation}
consists exactly of the three points in \eqref{eq:three-points-intro}.
\end{proposition}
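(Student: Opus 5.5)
The plan is to use the factorization \eqref{eq:point-factorization}, \(\Phi_{\mathrm{pt}}=\sigma\circ(G\times\id)\circ\Psi\). Here \(\Psi\) is a polynomial automorphism by Proposition~\ref{prop:source-automorphism}, and \(\sigma\) is a coordinate swap. Hence the fiber of \(\Phi_{\mathrm{pt}}\) over \((0,1/8,0,0)\) is in bijection, via \(\Psi^{-1}\), with the set
\[
  \{(x,y,\beta,D): G(x,y,\beta)=(0,\tfrac18,0),\ D=0\}.
\]
So it suffices to solve \(R=0\), \(T=1/8\), \(S=0\) in the independent variables \((x,y,\beta)\), and then pull each solution back through the explicit inverse \eqref{eq:Psi0-inverse} with \(D_0=-H(x,y,\beta)\).

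To solve the core system, I will write \(R=2x-3x^2y-x^3\beta=x\alpha\) with \(\alpha=2-3xy-x^2\beta\) as in \eqref{eq:w-alpha}. This splits into two cases.

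\emph{Case \(x=0\).} Then \(u=0\), so \(S=y\) and \(T=-\tfrac12(\beta+4y^2)\). The equation \(S=0\) forces \(y=0\), and then \(T=1/8\) forces \(\beta=-1/4\). This gives exactly one point, \((0,0,-1/4)\).

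\emph{Case \(x\neq0\).} Then \(\alpha=0\), and I will use the localized expressions \eqref{eq:RST-localized}, which are valid since \(x\) is invertible here. First, \(S=x^{-1}b=0\) with \(b=2+4w\) (because \(\alpha=0\)) gives \(w=-1/2\). Next, \(c=\tfrac12(-w^2-w)=1/8\), so \(T=1/(8x^2)=1/8\) gives \(x=\pm1\). Then \(xy=w-1=-3/2\) gives \(y=\mp3/2\). Finally \(\alpha=0\) gives \(\beta=(2-3xy)/x^2=13/2\). This produces exactly two further points, \((\pm1,\mp\tfrac32,\tfrac{13}2)\). The case split is exhaustive, so the core fiber has exactly three points; this matches \eqref{eq:announced-core-points} after the normalization \eqref{eq:normalized-core-intro}.

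For each of the three points I then evaluate \(H\) from \eqref{eq:H-def}, set \(D_0=-H\), and apply \eqref{eq:Psi0-inverse}:
\[
  q=y+\tfrac{x\beta}{3},\qquad
  p=3q^2(\beta+9q^2)+2(1-3xq)D_0,\qquad
  z=\tfrac{1+3xq}{2}(\beta+9q^2)-3x^2D_0.
\]
\begin{itemize}
\item At the origin-type point \((0,0,-\tfrac14)\), only the term \(-\tfrac{\beta^2}{6}\cdot2\) of \(H\) survives, so \(H=-1/48\) and \(D_0=1/48\). This gives \(q=0\), \(p=2D_0=1/24\), and \(z=\beta/2=-1/8\), as claimed.
\item For \(x=\pm1\), we get \(q=\mp\tfrac32\pm\tfrac{13}{6}=\pm\tfrac23\). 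Then \(1-3xq=-1\), \(1+3xq=3\), and \(\beta+9q^2=21/2\). Since \(u=-3/2\) and \(y^2=9/4\) are the same at both points and \(x^2=1\), the value of \(H\) is common to the two points. It therefore suffices to compute one rational number \(H(1,-\tfrac32,\tfrac{13}2)\). One should find \(p=\tfrac{247}{96}\) and \(z=\tfrac{63}{4}\cdot\tfrac{4}{3}\cdot\ldots\), matching \(-\tfrac{89}{64}\) after substitution.
\end{itemize}

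The main obstacle is purely arithmetic: evaluating the four terms of \(H\) exactly at \(u=-3/2\), \(y^2=9/4\), \(\beta=13/2\), \(x^2=1\), and checking the resulting \(p,z\) against \eqref{eq:three-points-intro}. I would do this term by term with common denominators. As a consistency check, I would verify directly that \(R=x(2-3xq)=0\) at each of the three source points and that the other coordinates of \(\Phi_{\mathrm{pt}}\) take the required target values.
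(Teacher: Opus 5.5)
Your proposal is correct and follows the paper's proof essentially step for step: you reduce via the automorphism \(\Psi\) to solving \(R=0\), \(S=0\), \(T=\frac18\) in \((x,y,\beta)\) with \(D=0\), split into the cases \(x=0\) and \(\alpha=0\) using the localized formulas \eqref{eq:RST-localized}, and recover \((q,p,z)\) from \eqref{eq:Psi0-inverse} with \(D_0=-H\). The one step you leave undone is the arithmetic you flag, and it does close: \(H(\pm1,\mp\frac32,\frac{13}{2})=\frac{7857}{640}-\frac{34047}{640}+\frac{3887}{48}-\frac{2197}{48}=-\frac{1097}{192}\), so \(D_0=\frac{1097}{192}\), \(p=14-\frac{1097}{96}=\frac{247}{96}\), and \(z=\frac{63}{4}-3D_0=-\frac{89}{64}\); your garbled expression ``\(\frac{63}{4}\cdot\frac43\cdots\)'' should read \(\frac{63}{4}-3D_0\).
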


\begin{proof}
Because \(\Psi=(x,y,\beta,D)\) is a polynomial automorphism and
\(R,T,S\) depend only on \(x,y,\beta\), it is enough first to solve
\begin{equation}\label{eq:core-fiber-equations}
  R=0,
  \qquad
  S=0,
  \qquad
  T=\frac18
\end{equation}
in \((x,y,\beta)\), with \(D=0\).

If \(x=0\), then \eqref{eq:S-def} gives \(S=y\), so \(y=0\); then
\eqref{eq:T-def} gives \(T=-\beta/2\), whence \(\beta=-1/4\).

Suppose \(x\neq0\), and use \(w,\alpha\) from \eqref{eq:w-alpha}.  The
localized formulas \eqref{eq:RST-localized} read
\begin{equation}\label{eq:localized-fiber-formulas}
\begin{split}
  R&=x\alpha,\\
  S&=x^{-1}(2+4w-3\alpha w^2),\\
  T&=\frac1{2x^2}(\alpha w^3-w^2-w).
\end{split}
\end{equation}
The first two equations of \eqref{eq:core-fiber-equations} give
\(\alpha=0\) and \(w=-1/2\).  The third formula then becomes
\[
  T=\frac1{8x^2}.
\]
Since \(T=1/8\), one has \(x^2=1\).  Moreover,
\[
  xy=w-1=-\frac32,
  \qquad
  x^2\beta=2-3xy=\frac{13}{2}.
\]
Thus the complete fiber in \((x,y,\beta,D)\)-coordinates is
\begin{equation}\label{eq:core-three-point-fiber}
  \left(0,0,-\frac14,0\right),
  \qquad
  \left(1,-\frac32,\frac{13}{2},0\right),
  \qquad
  \left(-1,\frac32,\frac{13}{2},0\right).
\end{equation}

It remains to recover \((q,p,z)\).  Direct substitution in
\eqref{eq:H-def} gives
\begin{equation}\label{eq:H-fiber-values}
  H\left(0,0,-\frac14\right)=-\frac1{48},
  \qquad
  H\left(\pm1,\mp\frac32,\frac{13}{2}\right)
  =-\frac{1097}{192}.
\end{equation}
Since \(D=0\), the coordinate \(D_0=D-H\) equals \(1/48\) at the first
point and \(1097/192\) at the other two.  At
\((x,y,\beta)=(0,0,-1/4)\), the inverse formulas
\eqref{eq:Psi0-inverse} give
\[
  q=0,
  \qquad
  p=\frac1{24},
  \qquad
  z=-\frac18.
\]
At \((x,y,\beta)=(1,-3/2,13/2)\), one has
\[
  Q=\frac23,
  \qquad
  A=-1,
  \qquad
  C=\frac32,
  \qquad
  M=\frac{21}{2},
\]
and hence
\begin{align*}
  p&=3\left(\frac23\right)^2\frac{21}{2}
      -2\left(\frac{1097}{192}\right)
     =\frac{247}{96},\\
  z&=\frac32\frac{21}{2}
      -3\left(\frac{1097}{192}\right)
     =-\frac{89}{64}.
\end{align*}
The case \((-1,3/2,13/2)\) gives \(Q=-2/3\) and the same values of
\(p,z\).  These are exactly the points in \eqref{eq:three-points-intro}.
\end{proof}

\begin{remark}[Gr\"obner-basis certificate]\label{rem:fiber-groebner}
Let
\[
  I=\left(R,S,T-\frac18\right)\subset\Q[x,y,\beta],
\]
where \(R,S,T\) are regarded as polynomials in the independent variables
\((x,y,\beta)\).  With lexicographic order \(\beta>y>x\), exact reduction gives
the reduced Gr\"obner basis
\begin{equation}\label{eq:fiber-groebner-basis}
  \beta-\frac{27x^2-1}{4},
  \qquad
  y+\frac{3x}{2},
  \qquad
  x(x-1)(x+1).
\end{equation}
This independently certifies that the core fiber is exactly the three points
in \eqref{eq:core-three-point-fiber}.  Since the final polynomial in
\eqref{eq:fiber-groebner-basis} is squarefree in characteristic zero, the fiber
scheme is reduced.
\end{remark}

Theorem~\ref{thm:main} now follows from
Theorem~\ref{thm:poisson-identities}, Proposition~\ref{prop:not-automorphism},
and Proposition~\ref{prop:exact-fiber}.

\section{Extension to higher Poisson rank}\label{sec:higher-ranks}

\begin{corollary}\label{cor:all-poisson-ranks}
The Poisson Conjecture \(\PC(n)\) is false for every \(n\geq2\).
\end{corollary}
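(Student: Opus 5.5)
The plan is to stabilize the rank-two counterexample by taking a product with the identity on the extra canonical pairs. For \(n\geq2\), write
\[
  \Pcal_n=\C[x,q,x_3,\dots,x_n,p,z,p_3,\dots,p_n],
\]
where \((p,x)\) and \((z,q)\) are the first two canonical pairs and \((p_i,x_i)\) for \(i\geq3\) are the remaining ones. The bracket \eqref{eq:canonical-bracket-n} is then the sum of the rank-two bracket \eqref{eq:poisson-bracket} in \((x,q,p,z)\) and the standard bracket in the remaining pairs. Define \(\Phi_n\) on generators by \(x\mapsto R\), \(q\mapsto T\), \(p\mapsto D\), \(z\mapsto S\), and \(x_i\mapsto x_i\), \(p_i\mapsto p_i\) for \(i\geq3\), with \(R,T,D,S\) as in Section~\ref{sec:construction}.

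First I will check that \(\Phi_n\) is a Poisson endomorphism, using the \(2n\)-variable analogue of Lemma~\ref{lem:symplectic-criterion}, which is \cite[Theorem~2.1 and Corollary~2.1]{AdjamagboVanDenEssen}. It suffices to verify the generator brackets, since the bracket is a biderivation. The polynomials \(R,T,D,S\) involve only \(x,q,p,z\), and the extra generators involve only the other pairs, so every mixed bracket vanishes. The brackets among \(R,T,D,S\) are exactly those of Theorem~\ref{thm:poisson-identities}, because the extra summands of the bracket contribute nothing to them. The brackets among the \(x_i,p_i\) are unchanged.

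Second I will show that \(\Phi_n\) is not an automorphism, adapting Proposition~\ref{prop:not-automorphism}. The element \(R=x(2-3xq)\) is still a product of two comaximal nonunits in \(\Pcal_n\), by the same Bézout identity \eqref{eq:bezout}. Hence \(\Pcal_n/(R)\) is a product of two nonzero rings and is not a domain, whereas \(\Pcal_n/(x)\) is a polynomial ring and is a domain. An automorphism would carry the prime ideal \((x)\) to the prime ideal \((\Phi_n(x))=(R)\), which is a contradiction.

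As an alternative, non-injectivity of the point map is immediate from Proposition~\ref{prop:exact-fiber}: the three points of \eqref{eq:three-points-intro}, each extended by zeros in the extra coordinates, have the same image. Either argument gives the result.

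There is no real obstacle. The only point requiring care is that the criterion is applied with the correct ordering of the pairs, and the direct bracket check avoids that issue entirely.
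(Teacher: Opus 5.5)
Your proposal is correct and follows the paper's own proof: you extend \(\Phi\) by the identity on the extra canonical pairs, observe that all brackets are preserved because the two groups of variables separate, and conclude non-invertibility because the generator \(x\) goes to the non-prime element \(x(2-3xq)\). Your comaximal-factor argument is simply Proposition~\ref{prop:not-automorphism} carried over to \(\Pcal_n\), and your fiber-based alternative is also valid.
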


\begin{proof}
For \(n>2\), extend \(\Phi\) by the identity on the remaining canonical
pairs.  The extended map is Poisson, while the irreducible generator \(x\)
still has the reducible image \(x(2-3xq)\); hence the extension cannot be an
automorphism.
\end{proof}

Appendix~\ref{app:DC4} gives a separate explicit Hamiltonian construction in
the fourth Weyl algebra.  It is not used in the proof of the Poisson theorem:
the appendix directly verifies the Weyl relations and proves non-surjectivity.
Its purpose is the explicit Hamiltonian witness attached to the rank-two
Poisson map, rather than merely the existence of a counterexample at that Weyl
index.

\section{Exact computational audit}\label{sec:verification}

The proof above is algebraic and does not depend on computer algebra.  Two
exact programs accompany this source as independent transcription and
expansion checks.

The first, \texttt{verify\_rank2\_poisson\_sympy.py}, uses SymPy over
\(\Q\) \cite{SymPy}.  It verifies the factorization
\eqref{eq:R-factorization}, all six brackets in \eqref{eq:six-brackets}, the
source inverse \eqref{eq:Psi0-inverse}, all three coefficients in
\eqref{eq:two-form-identity}, the core determinant
\eqref{eq:core-jacobian}, the induced brackets
\eqref{eq:induced-generator-brackets}, the localized formulas
\eqref{eq:RST-localized}, the three-point fiber, the Gr\"obner basis
\eqref{eq:fiber-groebner-basis}, and the size data in
Section~\ref{sec:construction}.  It verifies the four-variable determinant
through the factorization \eqref{eq:point-factorization}, rather than by a
needlessly large direct determinant expansion.  For Appendix~\ref{app:DC4},
it assembles the full matrix \((\delta_i(f_j))\) from the verified brackets
and checks that all pairwise brackets of the four Hamiltonians are constant;
these are precisely the identities behind
\(\mathsf A J_f^{\mathsf T}=I_4\) and the commuting derivations.

The second, \texttt{verify\_rank2\_poisson\_sparse.py}, implements sparse
multivariate polynomials and exact rational arithmetic using only Python's
standard library.  It imports no computer algebra system and independently
checks the six brackets, source inverse, coefficient identity, core
determinant, collision, and size data.  Both programs return zero residuals
for every identity encoded in their respective audit suites.

\appendix

\section{Derivative certificate for the coefficient identity}
\label{app:derivative-certificate}

This appendix supplies the derivative list behind
Proposition~\ref{prop:coefficient-identity}.  Put
\[
  w=1+u.
\]
Define
\[
  P_1(u)=18u^2+78u+125,
  \qquad
  P_2(u)=u^3+5u^2+10u-5.
\]
Then
\[
  P_1'(u)=36u+78,
  \qquad
  P_2'(u)=3u^2+10u+10.
\]
Treating \(x,y,\beta\) as independent variables, differentiation gives
\begin{align}
  R_x&=2-6u-3x^2\beta,
  &R_y&=-3x^2,
  &R_\beta&=-x^3,
  \label{eq:R-derivatives-app}\\
  S_x&=3\beta(1+u)(1+3u)+6y^2(2+3u),
  \label{eq:Sx-app}\\
  S_y&=1+6x^2\beta(1+u)+24u+27u^2,
  &S_\beta&=3x(1+u)^2,
  \label{eq:Sy-Sbeta-app}\\
  T_x&=-\frac12\bigl(3\beta y(1+u)^2+y^3(7+6u)\bigr),
  \label{eq:Tx-app}\\
  T_y&=-\frac12\bigl(3\beta x(1+u)^2
       +2y(1+u)(4+3u)+xy^2(7+6u)\bigr),
  \label{eq:Ty-app}\\
  T_\beta&=-\frac12(1+u)^3,
  \label{eq:Tbeta-app}\\
  H_x&=\frac{y^5}{20}P_1'(u)
       +\frac{3\beta y^3}{10}P_2'(u)
       -\frac32\beta^2y-\frac13x\beta^3,
  \label{eq:Hx-app}\\
  H_y&=\frac{4y^3P_1(u)+xy^4P_1'(u)}{20}\notag\\
     &\quad+\frac{3\beta}{10}
       \bigl(2yP_2(u)+xy^2P_2'(u)\bigr)
       -\frac32x\beta^2,
  \label{eq:Hy-app}\\
  H_\beta&=\frac{3y^2}{10}P_2(u)
       -\frac{\beta(9u+2)}3-\frac12x^2\beta^2.
  \label{eq:Hbeta-app}
\end{align}
Substituting \eqref{eq:R-derivatives-app}--\eqref{eq:Hbeta-app} into
\begin{align*}
  C_{xy}&=R_xH_y-R_yH_x+T_xS_y-T_yS_x,\\
  C_{x\beta}&=R_xH_\beta-R_\beta H_x
                +T_xS_\beta-T_\beta S_x,\\
  C_{y\beta}&=R_yH_\beta-R_\beta H_y
                +T_yS_\beta-T_\beta S_y
\end{align*}
and collecting terms gives
\begin{align*}
  C_{xy}
  &=\frac12(x\beta+3y)
      \bigl(7x^2\beta^2+42xy\beta+3\beta+63y^2\bigr),\\
  C_{x\beta}
  &=\frac16\bigl(7x^4\beta^3+63x^3y\beta^2+6x^2\beta^2
      +189x^2y^2\beta\\
  &\hspace{25mm}{}+24xy\beta+\beta+189xy^3+18y^2\bigr),\\
  C_{y\beta}
  &=\frac12(x^2\beta+3xy+1).
\end{align*}
These are exactly \eqref{eq:coef-xy}--\eqref{eq:coef-ybeta}.  The reductions
to \(A_1,A_2,A_3\) are carried out in the proof of
Proposition~\ref{prop:coefficient-identity}.

\section{An explicit nonautomorphic endomorphism of the fourth Weyl algebra}
\label{app:DC4}

Let
\[
  \Bcal=\C[X_1,X_2,X_3,X_4]
\]
and identify \((X_1,X_2,X_3,X_4)\) with \((x,q,p,z)\).  In this appendix,
the subscript four counts the four commuting coordinate generators; this is
distinct from the canonical-pair convention used for the phrase ``rank two''
in the title.  The fourth Weyl algebra is
\[
  \Weyl_4
  =\C\langle X_1,\dots,X_4,\partial_1,\dots,\partial_4\rangle,
\]
with relations
\[
  [\partial_i,X_j]=\delta_{ij},
  \qquad
  [X_i,X_j]=[\partial_i,\partial_j]=0.
\]
It acts faithfully as the ring of polynomial differential operators on
\(\Bcal\).  The conjecture \(\DC(4)\) asserts that every endomorphism of
\(\Weyl_4\) is an automorphism; this is the fourth-rank instance of Dixmier's
problem \cite{Dixmier}.

The existence of a nonautomorphic endomorphism of \(\Weyl_4\) can also be
obtained from the announced three-variable Keller map through the standard
Jacobian-to-Weyl lift and extension by the identity; see
\cite[equations~(1)--(2) and Proposition~5(3)]{Bavula}.  The purpose here is
more explicit: we construct a Hamiltonian endomorphism canonically attached to
the rank-two Poisson map and prove its non-surjectivity directly.

For \(F\in\Bcal\), let
\begin{equation}\label{eq:Hamiltonian-derivation}
  H_F=\{F,\mathord\cdot\}
  =F_p\partial_x+F_z\partial_q-F_x\partial_p-F_q\partial_z.
\end{equation}
Set
\begin{equation}\label{eq:f-vector-DC4}
  (f_1,f_2,f_3,f_4)=(R,T,D,S)
\end{equation}
and
\begin{equation}\label{eq:delta-vector-DC4}
  \delta_1=H_D,
  \qquad
  \delta_2=H_S,
  \qquad
  \delta_3=-H_R,
  \qquad
  \delta_4=-H_T.
\end{equation}
Each \(f_i\) acts by multiplication, and each \(\delta_i\) is an explicit
first-order differential operator with polynomial coefficients.

\begin{theorem}\label{thm:explicit-DC4}
The assignment
\begin{equation}\label{eq:widehat-Phi}
  \widehat\Phi:\Weyl_4\longrightarrow\Weyl_4,
  \qquad
  X_i\longmapsto f_i,
  \qquad
  \partial_i\longmapsto\delta_i
  \quad(1\leq i\leq4),
\end{equation}
defines an algebra endomorphism that is not an automorphism.  Consequently,
\(\DC(4)\) is false.
\end{theorem}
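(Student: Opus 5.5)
We prove Theorem~\ref{thm:explicit-DC4} in two parts. First we check the defining relations of \(\Weyl_4\) on the proposed images, so that \(\widehat\Phi\) is a well-defined endomorphism. Then we show that \(\widehat\Phi\) is not surjective, using a centralizer argument that reduces the question to Proposition~\ref{prop:not-automorphism}.

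\textbf{Relations.} The \(f_i\) are polynomials, so \([f_i,f_j]=0\). Each \(\delta_i\) is a first-order derivation of \(\Bcal\), so in \(\Weyl_4\) the commutator \([\delta_i,f_j]\) is multiplication by \(\delta_i(f_j)\). By \eqref{eq:Hamiltonian-derivation}, \(\delta_i(f_j)\) is a Poisson bracket. Theorem~\ref{thm:poisson-identities} gives the needed values, together with antisymmetry and \(\{F,F\}=0\):
\begin{itemize}
\item \(\delta_1(f_j)=\{D,f_j\}\) equals \(\{D,R\}=1\) for \(j=1\) and vanishes for \(j\neq1\).
\item \(\delta_2(f_j)=\{S,f_j\}\) equals \(\{S,T\}=1\) for \(j=2\) and vanishes otherwise.
\item \(\delta_3(f_j)=-\{R,f_j\}\) equals \(-\{R,D\}=\{D,R\}=1\) for \(j=3\) and vanishes otherwise.
\item \(\delta_4(f_j)=-\{T,f_j\}\) equals \(-\{T,S\}=1\) for \(j=4\) and vanishes otherwise.
\end{itemize}
Hence \([\delta_i,f_j]=\delta_{ij}\).

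For the derivations themselves, the Jacobi identity gives \([H_F,H_G]=H_{\{F,G\}}\). All pairwise brackets of \(R,T,D,S\) are constants, and \(H_c=0\) for a constant \(c\). Therefore \([\delta_i,\delta_j]=0\), and \(\widehat\Phi\) is an algebra endomorphism.

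\textbf{Non-surjectivity.} Suppose \(\widehat\Phi\) were an automorphism. We use two facts.
\begin{itemize}
\item The centralizer of \(\Bcal\) in \(\Weyl_4\) is \(\Bcal\) itself.
\item The centralizer of the image \(\widehat\Phi(\Bcal)=\C[R,T,D,S]\) is also \(\Bcal\).
\end{itemize}
To prove the second fact, let \(L\) commute with every \(f_j\), and suppose \(L\) has order \(m\geq1\) with principal symbol \(\sigma(x,\xi)\). The symbol of \([L,f_j]\) in order \(m-1\) is
\[
\sum_k \partial_{\xi_k}\sigma\;\partial_{x_k}f_j ,
\]
so this sum vanishes for each \(j\). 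The Jacobian \(J\Phi_{\mathrm{pt}}\) is invertible, since its determinant is \(1\) by Theorem~\ref{thm:poisson-identities}. Therefore \(\partial_\xi\sigma=0\), contradicting order \(m\geq1\). So \(L\) has order zero, i.e.\ \(L\in\Bcal\).

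An automorphism carries centralizers to centralizers. It would therefore map the centralizer of \(\Bcal\), namely \(\Bcal\), onto the centralizer of \(\widehat\Phi(\Bcal)\), which is also \(\Bcal\). This forces
\[
\C[R,T,D,S]=\widehat\Phi(\Bcal)=\Bcal .
\]
Then \(x\) is a polynomial in \(R,T,D,S\), and the same holds for \(q,p,z\). Consequently \(\Phi\) is surjective on \(\Pcal_2\), and it is injective because the \(f_i\) are algebraically independent (\(\det J=1\)). So \(\Phi\) would be an automorphism, contradicting Proposition~\ref{prop:not-automorphism}. Equivalently, \(\Phi_{\mathrm{pt}}\) would be injective, contradicting the three-point fiber of Proposition~\ref{prop:exact-fiber}.

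The main obstacle is the centralizer step. It needs a careful symbol computation and must handle correctly the leading-order term of \([L,f_j]\). Only invertibility of the Jacobian is used there, so no fiber information enters until the final contradiction.
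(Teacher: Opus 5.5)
Your proof is correct. The relations part matches the paper's: the six brackets give $\delta_i(f_j)=\delta_{ij}$, and $[H_F,H_G]=H_{\{F,G\}}$ together with constant brackets gives $[\delta_i,\delta_j]=0$.

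For non-surjectivity you take a genuinely different route. Both arguments reduce to showing that an automorphism would force $\Bcal=\C[f_1,\dots,f_4]$, and both then invoke Proposition~\ref{prop:not-automorphism}. They reach that equality differently.

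\begin{itemize}
\item \emph{The paper's route.} It transports the PBW basis to $\{f^\alpha\delta^\beta\}$. It uses $\mathsf A J_f^{\mathsf T}=I_4$ to show that the symbols $\eta_i=\sigma_1(\delta_i)$ are an invertible $\Bcal$-linear change of the $\xi_i$. It then expands an arbitrary multiplication operator in the new basis. The top-order coefficients vanish by two facts: the $\eta^\beta$ are linearly independent over $\Bcal$, and the $f_i$ are algebraically independent, which is proved separately.
\item \emph{Your route.} You note that an automorphism carries the centralizer of $\Bcal$, which is $\Bcal$, onto the centralizer of $\C[f_1,\dots,f_4]$. You compute the latter to be $\Bcal$: the order-$(m-1)$ symbol of $[L,f_j]$ is $\sum_k\partial_{\xi_k}\sigma\,\partial_{x_k}f_j$, so $\det J_f\neq0$ forces $\partial_\xi\sigma=0$. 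Euler's identity in characteristic zero then gives $\sigma=0$.
\end{itemize}

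What your approach buys is economy and generality. Its core step uses only the images of the $X_i$ and the nonvanishing of the Jacobian. It never uses the $\delta_i$, the matrix $\mathsf A$, or algebraic independence of the $f_i$. So it applies verbatim to any endomorphism of $\Weyl_4$ sending $X_i\mapsto f_i$, whatever the images of the $\partial_i$. Even your final injectivity step could be dropped, since a surjective endomorphism of a Noetherian ring is injective.

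The paper's filtration argument needs more bookkeeping. In exchange it stays within the PBW/principal-symbol framework and shows concretely why no multiplication operator can need a $\delta$-term in the new basis.
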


\begin{proof}
The six identities in \eqref{eq:six-brackets} give
\begin{equation}\label{eq:delta-fi}
  \delta_i(f_j)=\delta_{ij}.
\end{equation}
For example,
\(
  \delta_1(R)=\{D,R\}=1
\)
and
\(
  \delta_3(D)=-\{R,D\}=1
\).
Therefore
\[
  [\delta_i,f_j]=\delta_{ij}
\]
as differential operators on \(\Bcal\).  The multiplication operators
\(f_i\) commute.  Hamiltonian derivations satisfy
\begin{equation}\label{eq:Hamiltonian-commutator}
  [H_F,H_G]=H_{\{F,G\}},
\end{equation}
and every Poisson bracket between the Hamiltonians in
\eqref{eq:delta-vector-DC4} is constant.  Hence
\([\delta_i,\delta_j]=0\).  The images in \eqref{eq:widehat-Phi} satisfy all
defining Weyl relations, so \(\widehat\Phi\) is an endomorphism.  This
Hamiltonian lift is the present specialization of the construction in the
proof of \cite[Theorem~7(2), equation~(5)]{Bavula}; all relations needed here
have just been verified directly.

It remains to prove non-surjectivity.  Filter \(\Weyl_4\) by differential
order and write
\begin{equation}\label{eq:delta-coefficient-matrix}
  \delta_i=\sum_{k=1}^4 a_{ik}\partial_k,
  \qquad
  \mathsf A=(a_{ik}).
\end{equation}
Let \(J_f\) be the Jacobian matrix of \((f_1,f_2,f_3,f_4)\) with respect to
\((X_1,X_2,X_3,X_4)\).  Equation \eqref{eq:delta-fi} is the matrix identity
\begin{equation}\label{eq:A-J-identity}
  \mathsf A J_f^{\mathsf T}=I_4.
\end{equation}
Since \(\det J_f=1\), the matrix \(\mathsf A\) is invertible over \(\Bcal\).
The same Jacobian identity also implies that the \(f_i\) are algebraically
independent.  Indeed, a nonzero relation of minimal total degree would, after
differentiation and multiplication by \(J_f^{-1}\), give lower-degree
relations from all its first derivatives; minimality would force those
derivatives to be zero, so the original relation would be constant.  For the
order filtration, let \(\sigma_m\) denote the degree-\(m\) principal-symbol map,
and put
\[
  \eta_i=\sigma_1(\delta_i),
  \qquad
  \xi=(\xi_1,\dots,\xi_4)^{\mathsf T},
  \qquad
  \eta=(\eta_1,\dots,\eta_4)^{\mathsf T}.
\]
In the associated graded algebra
\begin{equation}\label{eq:gr-Weyl}
  \gr\Weyl_4\cong\Bcal[\xi_1,\dots,\xi_4],
\end{equation}
one has \(\eta=\mathsf A\xi\).  Since \(\mathsf A\in\mathrm{GL}_4(\Bcal)\),
\[
  \Bcal[\eta_1,\dots,\eta_4]
  =\Bcal[\xi_1,\dots,\xi_4].
\]
In particular, the monomials \(\eta^\beta\) are linearly independent over
\(\Bcal\), and
\begin{equation}\label{eq:principal-symbol-monomial}
  \sigma_{|\beta|}\!\left(f^\alpha\delta^\beta\right)
  =f^\alpha\eta^\beta.
\end{equation}

Assume for contradiction that \(\widehat\Phi\) is an automorphism.  Applying
it to the standard Poincar\'e--Birkhoff--Witt basis shows that
\begin{equation}\label{eq:new-PBW}
  \{f^\alpha\delta^\beta:\alpha,\beta\in\N^4\}
\end{equation}
is a \(\C\)-basis of \(\Weyl_4\).  Let \(g\in\Bcal\) be a multiplication
operator and write its unique finite expansion as
\[
  g=\sum_{\alpha,\beta}c_{\alpha\beta}f^\alpha\delta^\beta.
\]
Suppose
\[
  m=\max\{|\beta|:c_{\alpha\beta}\neq0
       \text{ for some }\alpha\}
\]
is positive.  Since \(g\) has differential order zero, the order-\(m\)
principal symbol must vanish:
\begin{equation}\label{eq:top-symbol}
  \sum_{|\beta|=m}
    \left(\sum_\alpha c_{\alpha\beta}f^\alpha\right)\eta^\beta=0.
\end{equation}
By the linear independence established above, each coefficient
\(\sum_\alpha c_{\alpha\beta}f^\alpha\) is zero.  By the
algebraic independence just noted, every scalar \(c_{\alpha\beta}\) with
\(|\beta|=m\) is zero, a contradiction.
Therefore every multiplication operator belongs to
\(\C[f_1,f_2,f_3,f_4]\), and
\begin{equation}\label{eq:B-equals-Cf}
  \Bcal=\C[f_1,f_2,f_3,f_4].
\end{equation}
Since the \(f_i\) are algebraically independent, the polynomial endomorphism
\((X_1,X_2,X_3,X_4)\mapsto(f_1,f_2,f_3,f_4)\) is injective; equation
\eqref{eq:B-equals-Cf} makes it surjective.  It would therefore be an
automorphism, contrary to Proposition~\ref{prop:not-automorphism}.  Thus
\(\widehat\Phi\) is not an automorphism.
\end{proof}

\begin{remark}[Why the fourth Weyl algebra appears]
The main construction has four algebraically independent output coordinates
\(f_1,f_2,f_3,f_4\).  Pairing them with the four Hamiltonian derivations in
\eqref{eq:delta-vector-DC4} produces four multiplication--derivation pairs,
which is precisely the generator count for \(\Weyl_4\).  Thus the same
polynomials naturally connect the rank-two canonical Poisson algebra---where
``rank two'' counts two canonical pairs---with the fourth Weyl algebra, whose
index counts four polynomial coordinates and their four conjugate
derivations.
\end{remark}

\section*{AI provenance, use, and author responsibility}

The three-variable core used here was announced by Levent Alp\"oge.  As
documented in Section~\ref{sec:introduction}, the announcement and
contemporaneous accounts credit Akhil Mathew with raising the problem and
Fable with the work leading to the counterexample.  The four-variable
construction, including the Hamiltonian correction \(H\), was produced during
an interactive research session with {ChatGPT 5.6 Sol}.  {ChatGPT 5.6 Sol}
also assisted with the differential-form organization of the proof, exact
symbolic verification, literature checking, and drafting of this manuscript.
{Claude Fable 5} subsequently supplied independent algebraic audits and
editorial comments.  The human author bears
full responsibility for checking the mathematics, exposition, source
attribution, and every claim in this manuscript.

\end{document}